\newtheorem{definition}{Definition}[section]
\newtheorem{theorem}[definition]{Theorem}
\theoremstyle{remark}
\newtheorem{remark}[definition]{Remark}
\numberwithin{equation}{section}
\newcommand{\abs}[1]{\lvert#1\rvert}
\newcommand{\R}{\mathbb{R}}
\newcommand{\cL}{\mathcal{L}}
\newcommand{\fr}{\displaystyle\frac}
\newcommand{\jf}{\displaystyle\int}
\newcommand{\lt}{\left}
\newcommand{\rt}{\right}
\newcommand{\lm}{\lambda}
\newcommand{\Om}{\Omega}
\newcommand{\Sm}{\Sigma}
\newcommand{\mb}{\mbox}
\newcommand{\tm}{\times}
\newcommand{\ve}{\varepsilon}
\title{ A Liouville Theorem and Radial Symmetry for dual fractional  parabolic equations}
\author[a,c]{Yahong Guo}
\author[b,c]{Lingwei Ma}
\author[d]{Zhenqiu Zhang\thanks{Corresponding author.}}
\affil[a]{School of Mathematical Sciences, Nankai University, Tianjin, 300071, P.~R.~China}
\affil[b]{School of Mathematical Sciences, Tianjin Normal University, Tianjin, 300387, P.~R. ~China}
\affil[c]{Department of Mathematical Sciences, Yeshiva University, New York, NY, 10033, USA}
\affil[d]{School of Mathematical Sciences and LPMC, Nankai University, Tianjin, 300071, P.~R.~China}
\begin{document}
\maketitle
\footnotetext[1]{E-mail: guoyahong1995@outlook.com (Y. Guo), mlw1103@163.com (L. Ma), zqzhang@nankai.edu.cn (Z. Zhang) .}

\begin{abstract}

In this paper, we first study the  dual fractional parabolic equation
\begin{equation*}
\partial^\alpha_t u(x,t)+(-\Delta)^s u(x,t) =  f(u(x,t))\ \ \mbox{in}\ \ B_1(0)\times\R ,
\end{equation*}
subject to the vanishing exterior condition. We show that for each $t\in\R$, the positive bounded  solution $u(\cdot,t)$  must be radially symmetric and strictly decreasing about the origin in the unit ball in $\R^n$. 
 To overcome the challenges caused by the dual non-locality of the operator $\partial^\alpha_t+(-\Delta)^s$, some novel techniques were introduced.

 Then we establish  the Liouville theorem for the homogeneous equation in the whole space
\begin{equation*}\label{B}
\partial^\alpha_t u(x,t)+(-\Delta)^s u(x,t) = 0\ \ \mbox{in}\ \ \R^n\times\R .
\end{equation*}
 We first prove a   maximum principle in unbounded domains for  anti-symmetric functions to  deduce that $u(x,t)$ must be constant with respect to  $x.$
  Then it suffices for us  to  establish the Liouville theorem for  the Marchaud fractional equation
 \begin{equation*}
\partial^\alpha_t u(t) = 0\ \ \mbox{in}\ \ \R .
\end{equation*}
To circumvent the difficulties arising from the nonlocal and one-sided nature of the operator $\partial_t^\alpha$, we  bring in  some new ideas and  simpler approaches. Instead of disturbing the anti-symmetric function, we employ a perturbation technique directly on the solution $u(t)$ itself.   This method provides a more concise and intuitive route to establish the Liouville theorem for one-sided operators $\partial_t^\alpha$, 
including even  more general Marchaud time derivatives.

\textbf{Mathematics Subject classification (2020): }  35R11; 35B06, 47G30; 35B50; 35B53.

\textbf{Keywords:}   dual fractional parabolic equations;   direct method
of moving planes; narrow region principle; radial symmetry; monotonicity; Liouville theorem; .   \\
\end{abstract}

\section{Introduction}
\
\indent  The primary objective of this paper is to investigate the qualitative properties of solutions to  dual nonlocal parabolic equations associated with the operator $\partial_t^\alpha+(-\Delta)^s$. More precisely, we first investigate the radial symmetry and monotonicity of solutions for the following equation in the unit ball
\begin{equation}\label{A}
\left\{
\begin{array}{ll}
\partial^\alpha_t u(x,t)+(-\Delta)^s u(x,t) =  f(u(x,t))\ \ &\mbox{in}\ \ B_1(0)\times\R ,\\
u(x,t)\equiv 0 \ \ &\mbox{in}\ \ B_1^c(0)\times\R.
\end{array}
\right.
\end{equation}
Then we  establish the Liouville theorem  for the homogeneous equation in the whole space
\begin{equation}\label{B}
\partial^\alpha_t u(x,t)+(-\Delta)^s u(x,t) = 0\ \ \mbox{in}\ \ \R^n\times\R .
\end{equation}

 The one-sided nonlocal time derivative $\partial_t^\alpha$ considered here is known as the Marchaud fractional derivative  of order $\alpha$, defined as
\begin{equation}\label{1.00}
\partial^\alpha_t u(x,t)=C_\alpha\jf_{-\infty}^t\fr{u(x,t)-u(x,\tau)}{(t-\tau)^{1+\alpha}}d\tau,
\end{equation}
with $0<\alpha<1, C_\alpha=\fr{\alpha}{\Gamma({1-\alpha})}$ and $\Gamma$ represents the Gamma function.
 From the definition, such fractional time derivative  depends on the values of  function  from the past, sometime also denoted as $(D_{\rm{left}})^\alpha$. 
The spatial nonlocal elliptic pseudo-differential operator, the fractional Laplacian $(-\Delta)^s$
is defined as
 \begin{equation}\label{1.0}
(-\Delta)^su(x,t)= C_{n,s}P.V.\jf_{\R^{n}}\fr{u(x,t)-u(y,t)}{\abs{x-y}^{n+2s}}dy.
\end{equation}
where $0 < s < 1$, $C_{n,s} := \frac{4^s \Gamma\left(\frac{n+2s}{2}\right)}{\pi^{n/2} \left|\Gamma(-s)\right|}$ is a normalization positive constant and $P.V.$ stands for the Cauchy principal
value.
In order to  guarantees that the singular integral in  \eqref{1.00} and \eqref{1.0}  are
well defined, we assume that    $$u(x,t)\in\left(\cL_{2s}\cap C^{1,1}_{loc}(\R^n)\right)\times\left(C^1(\R)\cap \cL^{-}_\alpha(\R)\right),$$  Here,  the slowly increasing function spaces $\cL_{2s}$ and $\cL^{-}_\alpha(\R)$ are defined respectively by

 $$\cL_{2s}:=\lt\{v\in L^1_{loc}(\R^n) \ | \int_{\mathbb{R}^n} \frac{|v(x)|}{1+|x|^{n+2s}} dx < +\infty\rt\}$$
 and
 $$\cL^{-}_\alpha(\R):=\lt\{v\in L^1_{loc}(\R)\ | \int_{-\infty}^{t} \frac{|v(\tau)|}{1+|\tau|^{1+\alpha}} d\tau < +\infty \text{ for\ each}\  t\in\mathbb{R}\rt\}.$$

A typical application of equation in \eqref{A} is in modeling  continuous time random walks \cite{33}, which generalizes Brownian random walks. This fractional kinetic equation introduces nonlocality in time, leading to history dependence due to unusually large waiting times, and nonlocality in space, accounting for unusually large jumps connecting distant regions, such as L$\acute{e}$vy flights. In applications within financial field, it can also be used to model the waiting time between
transactions is correlated with the ensuring price jump (cf. \cite{37}). Another model  is presented in \cite{dCN} to simulate transport of tracer particles in plasma, where the function $u$ is the probability density
function for tracer particles which represents the probability of finding a particle
at time $t$ and position $x$, the right hand side $f$ is a source term. In this case, the  nonlocal space operator $(-\Delta)^s$  accounts for avalanche-like transport that can occur, while the Marchaud time derivative  $\partial^\alpha_t$ accounts
for the trapping of the trace particles in turbulent eddies. It is worth mentioning that the nonlocal operator \(\partial^\alpha_t + (-\Delta)^s\) in problem \eqref{A} can be reduced to the local heat operator \(\partial_t - \Delta\) as \(\alpha \to 1\) and \(s \to 1\).

The method of moving planes, initially introduced by Alexandroff in \cite{28} and simplified by Berestycki and Nirenberg \cite{HB3}, is a widely used technique for studying the monotonicity of solutions to local elliptic and parabolic equations. However, this approach can not be applied directly to psuedo-differential equations involving the fractional Laplacian
due to its nonlocality. To circumvent this difficulty, Cafferelli and Silvestre \cite{14} introduced an extension method that can turn a non-local equation to a local one in higher dimensions. Thereby the traditional method
of moving planes  designed for local equations  can be applied for the extended problem to establish the well-posedness of solutions,  and   a series of interesting results have been obtained in \cite{Chang,11,  13, 18, Sun1,20,Sun2, LC2,31} and the references therein.
However, this method is exclusively applicable to  equations involving the fractional Laplacian and sometimes additional restrictions may need to be imposed on the problems, while it  will not be necessary in dealing with the fractional equations directly. To remove these restrictions, Chen, Li, and Li \cite{11} introduced a direct method of moving planes  nearly ten years later. This method  significantly simplify the proof process and has been widely applied to establish the symmetry, monotonicity, non-existence of solutions
for various elliptic equations and systems involving the fractional Laplacian, the fully nonlinear
nonlocal operators, the fractional p-Laplacians as well as the higher order fractional operators, we
refer to \cite{ CL,CLL,CW,DQ,LC3,34} and the references therein.  Recently, this method has also been gradually made use of  studing the geometric behavior of solutions for fractional parabolic equations  with the general local time derivative $\partial _t u(x,t)$. (cf. \cite{10,CM,JW,WuC}
and the references therein).
In particular,  the authors of  \cite{10}  established symmetry and monotonicity of positive solutions on a
unit ball for the classical parabolic problem
\begin{equation}\label{Local}
\left\{
\begin{array}{ll}
\partial_t u(x,t)+(-\Delta)^s u(x,t) =  f(u(x,t))\ \ &\mbox{in}\ \ B_1(0)\times\R ,\\
u(x,t)\equiv 0 \ \ &\mbox{in}\ \ B_1^c(0)\times\R.
\end{array}
\right.
\end{equation}
However, so far as we know, there is still a lack of research on the geometric properties of solutions to nonlocal parabolic equations \eqref{A} with  the Marchaud fractional time derivative $\partial _t^\alpha u(x,t)$ and the fractional Laplacian$(-\Delta)^s$.
Recently, Guo, Ma and Zhang \cite{24} employed a suitable sliding method, first introduced by Berestycki and Nirenberg \cite{HB3}, to demenstrate the generalized version of Gibbons' conjecture in the setting of
the dual  nonlocal parabolic equation
 \begin{equation*}
\partial^\alpha_t u(x,t)+\cL u(x,t) =  f(t,u(x,t))\ \ \mbox{in}\ \ \R^n\times\R.
\end{equation*}
Here the spatial nonlocal elliptic operators of integro-differential type is defined as
 \begin{equation}\label{D}
\cL u(x,t)= P.V.\jf_{\R^{n}}\left[u(x,t)-u(y,t)\right]\cdot K(x,y)dy.
\end{equation}
  Chen and Ma \cite{cm} carried out  a suitable direct method of moving planes to obtain the monotonicity of positive solutions for the following problem
\begin{equation*}
\left\{
\begin{array}{ll}
\partial^\alpha_t u(x,t)+(-\Delta)^s u(x,t) =  f(u(x,t))\ \ &\mbox{in}\ \mathbb{R}^n_+ \times \mathbb{R}, ,\\
u(x,t)\equiv 0 \ \ &\mbox{in}\ \ (\mathbb{R}^n \setminus \mathbb{R}^n_+) \times \mathbb{R}.
\end{array}
\right.\end{equation*}
Therefore, our first main interest here is to apply a direct method of moving planes to establish the radial symmetry and monotonicity of solutions to problem \eqref{A} in the unit ball.

Our second main objective is to establish the Liouville theorem of equation \eqref{B}. The classical  Liouville theorem states that any bounded harmonic function defined in the entire space $\mathbb{R}^n$  must be identically constant. This theorem plays a crucial role in deriving a priori estimates and establishing the qualitative properties of solutions, including their existence, nonexistence, and uniqueness. As a result, it has been extensively studied in the analysis of partial differential equations and this area of study has been further extended to various types of  elliptic and fractional elliptic  equations, even to $k$-Hessian equations using diverse methods, including Harnack inequalities, blow-up and compactness arguments, as well as Fourier analysis (cf. \cite{Chang1,BK,18,DQ,F,M,SZ} and the references therein).

In the context of nonlocal homogeneous parabolic equation \eqref{B}, when restricted the domain of $t$ to $(-\infty, 0]$,
 Widder \cite{W} proved that all bounded solutions $u(x, t)$ must be constant in case of $\alpha=1,s=1$; while for $\alpha=1,0<s<1$,
Serra \cite{S} showed that the solutions with some growth condition  is a constant.
In recent times, Ma, Guo and Zhang \cite{MGZ} demonstrated that the bounded entire solutions of the homogeneous
master equation
\begin{equation}\label{C}(\partial_t -\Delta)^s u(x, t) = 0 \mb{~in~} \mathbb{R}^n \times \mathbb{R},\end{equation}
must be constant.
Here the fully fractional heat operator $(\partial_t -\Delta)^s$ was first proposed by Riesz \cite{R}, and it can be defined pointwise using the following singular integral:
$$(\partial_t -\Delta)^s u(x, t) := C_{n,s} \jf_{-\infty}^{t} \jf_{\mathbb{R}^n} \frac{u(x, t) - u(y, \tau)}{(t - \tau)^{n+2s}} e^{-\frac{|x - y|^2 }{ 4(t - \tau)}} dy d\tau,$$
where $0 < s < 1$,
$C_{n,s} = \frac{1}{(4\pi)^{n/2} |\Gamma(-s)|}$.
It is essential to emphasize that in \cite{MGZ}, we first established the maximum principles for  operators  $(\partial_t -\Delta)^s$ to conclude that any bounded solution $u(x, t)$ must be  constant with respect to the spatial variable $x$. i.e. $u(x,t)=u(t).$ This will simplify  equation \eqref{C} to a one-sided one-dimensional fractional equation
\begin{equation}\label{time}\partial^\alpha_t u(t)=0 ~ \mbox{in}\ \ \R.\end{equation}
Then we  obtained that the bounded solution $u(t)$ must be  constant with respect to $t$ by employing  the method of Fourier analysis which is  applicable to more general distributions beyond bounded functions. While this method  does not fully capture the one-sided nature of the operator  $\partial_t^\alpha.$
Taking inspiration from these findings, our second main objective is to develop an alternative and more straightforward method  to generalize the Liouville theorem to the dual fractional parabolic  operator $\partial_t^\alpha+(-\Delta)^s$ in the whole space.

Now we explain the novelty and challenges of our approach in
deriving the radial symmetry of solutions for problem \eqref{A} in the unit ball and the Liouville theorem for equation \eqref{B} in the whole space
by analysing the characteristics of the one-sided fractional time operator $\partial_t^\alpha$ and the (double-sided) fractional Laplacian $(-\Delta)^s$.

  In comparison with \cite{10} for the operator $\partial_t+(-\Delta)^s$ and \cite{MGZ} for the operator $(\partial_t -\Delta)^s$, a notable difference in this paper is that all perturbations are novel and constructed from  different scaling and shifting of smooth cut-off functions $\eta_k$ to match the dual fractional parabolic operators $\partial_t^\alpha+(-\Delta)^s$. Then
by applying the $\mathbf{Translation~ and ~Rescaling ~Invariance} $
\begin{equation}\label{TS}\cL\left[u\lt(\fr{x-\bar{x}}{r}\rt)\right] = \frac{1}{r^{\beta}}\cL u\left(\fr{x-\bar{x}}{r}\right),\end{equation}
to the specific operators $\cL=(-\Delta)^s$ with $\beta=2s$ and   $\cL=\partial_t^\alpha$ with $\beta=\alpha$, we  derive $$\cL\eta_k\lesssim \fr{1}{r^\beta},$$ which is a key estimate in proving the maximum principle for anti-symmetry functions with respect to $x$ as well as the Liouville theorem for the   Marchaud fractional operator $\partial_t^\alpha.$ Utilizing these essential tools, we can develop the direct  method of moving planes and obtain the Liouville theorem for the dual fractional operator $\partial_t^\alpha+(-\Delta)^s$ .

From one aspect, we  point out the distinction between the local time derivatives $\partial _t$ and the nonlocal operator $\partial_t^\alpha$   in the process of establishing the radial symmetry of solutions for equation \eqref{A} through the direct method of moving planes combined with the limiting argument.  Differing from 
traditional approaches employed for classical parabolic equations \eqref{Local}(cf. \cite{10}), we repeatedly  use the following two key $\mathbf{observations}$ arising from the nonlocal and one-sided nature of the one-dimensional fractional time operator $\partial_t^\alpha$.

$\mathbf{Observation~A.}$ If $u(\bar{t})=\min\limits_{t\in\R}u(t) ~(\mb{or~}\max\limits_{t\in\R}u(t))$, then $\partial_t^\alpha u(\bar{t})\leq 0~(\mb{or~}\geq 0).$

$\mathbf{Observation~B.}$ Assume that $u(\bar{t})=\min\limits_{t\in\R}u(t) ~(\mb{or~}\max\limits_{t\in\R}u(t))$. Then    $\partial_t^\alpha u(\bar{t})= 0$ if and only if $$u(t)\equiv u(\bar{t}) \mb{~in~}t<\bar{t}.$$

From another standpoint, we emphasize the different challenges between the one-sided operator $\partial_t^\alpha$ and the fractional
Laplacian $(-\Delta)^s$
in the process of deriving the Liouville theorem for homogeneous equation \eqref{B}. It is well-known that  the (double-sided) fractional Laplacian $(-\Delta)^s$ satisfies the
$\mathbf{Reflection ~Invariance}$ (or chain rule)
  \begin{equation}\label{R}(-\Delta)^s\left[u(x^\lm)\right] = (-\Delta)^s u\left(x^\lm\right), \end{equation}
 where $x^\lambda = (2\lambda - x_1, x')$ denote  the reflection of $x$ with respect to the hyperplane $x_1=\lm.$ 
However  this is no longer valid for the  fractional time derivative $\cL=\partial_t^\alpha$ due to its one-sided nature. Indeed, if we denote $(D_{\rm{left}})^\alpha:=\partial_t^\alpha$ and $t^\lm=2\lm-t$,  then instead of \eqref{R} we   obtain \begin{equation*}\label{R1}(D_{\rm{left}})^\alpha\left[u(t^\lm)\right] = (D_{\rm{right}})^\alpha u\left(t^\lm\right).\end{equation*}
Here $(D_{\rm{right}})^\alpha$ is also a fractional derivative that based on the values of the function  in the future, defined as \[(D_{\rm{right}})^\alpha u(t):=C_\alpha\jf_t^{+\infty}\fr{u(t)-u(\tau)}{(\tau-t)^{1+\alpha}}d\tau.\]

The property \eqref{R} plays a crucial role in establishing the   symmetry of solutions with respect to spatial planes and further deriving the Liouville theorem. Let us compare equation \eqref{B} with  the classical fractional  parabolic equation
\begin{equation}\label{R2}
\partial_tu(x,t)+(-\Delta)^su(x,t)=0 \mb{~in~} \R^n\tm\R.
\end{equation}
By establishing the maximum principle for anti-symmetric function $w(x,t)=u(x^\lm,t)-u(x,t),$ we  conclude that any bounded solution $u(\cdot, t)$ is symmetric with respect to any hyperplane in $\R^n$ for each fixed $t\in\R$, i.e.,
\[u(x,t)=u(t) \mb{~in~}\R^n\tm\R,\]
and hence $\partial_tu(t)=0.$ From this one can derive immediately  $u$, a bounded solution of equation \eqref{R2}, is a constant. 
However for the dual fractional parabolic equation \eqref{B},  it still need  to  further prove  the Liouville theorem for  Marchaud fractional equation \eqref{time}.  Due to the lack of reflection invariance \eqref{R} for one-sided operator  $\partial_t^\alpha$, one can not establish a maximum principle for the antisymmetric function \(w (t)= u(t^\lm) - u(t)\) in the same way as with double-sided operators like the fractional Laplacian. To circumvent this difficulty, 
in this paper, we  introduce some new ideas and simpler approaches. Inspired by the aforementioned $\mathbf{Observation~ }\mathbf{B} $ satisfied by the one-sided operator  itself, we directly begin  with the  definition of operator $\partial_t^\alpha$ and employ a perturbation technique on the solution $u(t)$ itself instead of on the anti-symmetric function $w(t)$.    It provides a more concise and intuitive method for establishing the Liouville theorem for one-sided operators $\partial_t^\alpha$. This is precisely a novel aspect of our work. In contrast to the Fourier analysis method used in our recent work \cite{MGZ}, this refined approach   highlights more directly the distinctions between one-sided and double-sided operators. 
Needless to say, focusing on  the nonlocal time operator, we work mainly with  $(D_{\rm{left}})^\alpha$. While all our results are equally valid for the right fractional time derivitive $(D_{\rm{right}})^\alpha$. In addition, it is  notable to emphasize that the proofs presented here for the
radial symmetry and monotonicity of solutions as well as the Liouville theorem  can be adapted to various nonlocal  equations involving  the spatial nonlocal elliptic operators $\cL$ as defined in \eqref{D} and the general
fractional time derivative (cf. \cite{1,2}) of the form
\begin{equation*}\label{GMD}
\int_{-\infty}^{t} [u(t)-u(s)]\mathcal{K}(t,s) ds.
\end{equation*}
Provided that the kernel $\mathcal{K}$ here and $K$ in \eqref{D}
possesses some radial decreasing property.

 Before presenting   the main results of this paper, we introduce the notation that will be used throughout the subsequent sections. Let $x_1$ be any given direction in $\mathbb{R}^n$,  $$T_\lambda = \{(x_1, x') \in \mathbb{R}^n \mid x_1 = \lambda, \lambda \in \mathbb{R}\}$$
be a moving planes perpendicular to the $x_1$-axis,
$$\Sigma_\lambda = \{x \in \mathbb{R}^n \mid x_1 < \lambda\}$$ be the region to the left of the hyperplane $T_\lambda$ in $\mathbb{R}^n$ and $$\Omega_\lambda = \Sigma_\lambda\cap B_1(0).$$
Furthermore, we denote the reflection of $x$ with respect to the hyperplane $T_\lambda$ as $$x^\lambda = (2\lambda - x_1, x_2, \ldots, x_n).$$
Let $u_\lambda(x, t) = u(x^\lambda, t)$, we define $$w_\lambda(x, t) = u_\lambda(x, t) - u(x, t).$$ It is evident that $w_\lambda(x, t)$ is an antisymmetric function of $x$ with respect to the hyperplane $T_\lambda$.

We are now ready to illustrate the main results of this paper.

\begin{theorem}\label{thm2}
Let $u(x,t)\in\left(C^{1,1}(B_1(0))\cap C(\overline{B_1(0)})\right)\times C^1(\R)$ be a  positive bounded solution of
\begin{equation}\label{1.11}
\left\{
\begin{array}{ll}
\partial^\alpha_t u(x,t)+(-\Delta)^s u(x,t) =  f(u(x,t))\ \ &\mbox{in}\ \ B_1(0)\times\R ,\\
u(x,t)\equiv 0 \ \ &\mbox{in}\ \ B_1^c(0)\times\R.
\end{array}
\right.
\end{equation}
Suppose that $f\in C^1([0,+\infty))$ satisfies $f(0)\geq 0$ and $f'(0)\leq 0.$
Then for each $t\in\R,$ $u(\cdot,t)$ is radially symmetric and strictly decreasing about the origin in $B_1(0)$.
\end{theorem}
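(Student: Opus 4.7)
The plan is to carry out a direct method of moving planes in an arbitrary direction, which we may take to be $x_1$, and to combine it with a time-translation compactness argument to compensate for the one-sided, nonlocal nature of $\pl_t^\al$. For each $\lm\in(-1,0]$, set $w_\lm(x,t)=u(x^\lm,t)-u(x,t)$, which is antisymmetric across $T_\lm$, vanishes on $B_1^c(0)\cap\Om_\lm$, and (by the mean-value theorem applied to $f$) satisfies a linear equation
\begin{equation*}
\pl_t^\al w_\lm(x,t)+(-\Delta)^s w_\lm(x,t) = c_\lm(x,t)\, w_\lm(x,t) \mb{~in~} \Om_\lm\tm\R,
\end{equation*}
with $c_\lm\in L^\infty$ because $f\in C^1$ and $u$ is bounded. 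The goal is to prove $w_\lm\geq 0$ in $\Om_\lm\tm\R$ for every $\lm\in(-1,0]$; running the same procedure from the opposite side then yields $w_0\equiv 0$, and because the $x_1$-direction is arbitrary, $u(\cdot,t)$ is radial about the origin.

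The first step is a narrow region principle: for $\lm$ sufficiently close to $-1$, the strip $\Om_\lm$ is thin, and any negative infimum $m:=\inf_{\Om_\lm\tm\R}w_\lm<0$ must be ruled out. Since $u$ is merely bounded and $t$ ranges over all of $\R$, this infimum need not be attained. I would choose $(x_k,t_k)\in\Om_\lm\tm\R$ with $w_\lm(x_k,t_k)\to m$, set $w_\lm^k(x,t)=w_\lm(x,t+t_k)$, and extract by an Arzel\`a-Ascoli/diagonal argument (using the $C^{1,1}$-in-$x$, $C^1$-in-$t$ regularity and the uniform bounds on $u$) a limit $w_\lm^\infty$ on $\Om_\lm\tm\R$ that still satisfies the linear equation, the antisymmetry, and the exterior condition, and that attains its infimum $m$ at some $(\bar x,0)\in\Om_\lm\tm\R$. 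Applying \textbf{Observation~A} to $w_\lm^\infty(\bar x,\cdot)$ gives $\pl_t^\al w_\lm^\infty(\bar x,0)\leq 0$, while the standard antisymmetric computation for $(-\Delta)^s$ across $T_\lm$ yields a positive lower bound of the form
\begin{equation*}
(-\Delta)^s w_\lm^\infty(\bar x,0)\geq \fr{C}{(\mb{width of }\Om_\lm)^{2s}}\,(-m),
\end{equation*}
whereas the right-hand side $c_\lm(\bar x,0)\,m$ is only of order $m$; choosing $\lm$ close enough to $-1$ that the width is small forces a contradiction.

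Next, enlarge the plane: set
\begin{equation*}
\lm_0:=\sup\lt\{\lm\in(-1,0]\,:\, w_\mu\geq 0 \mb{~in~} \Om_\mu\tm\R \mb{~for all~}\mu\in(-1,\lm]\rt\}.
\end{equation*}
I would argue by contradiction that $\lm_0=0$. If $\lm_0<0$, a strong maximum principle for antisymmetric functions (obtained through the same time-translation procedure combined with \textbf{Observation~B}, which forces $w_{\lm_0}^\infty(\bar x,t)$ to be identically zero for $t\leq 0$ at any interior zero and then, via the exterior condition $u\equiv 0$ on $B_1^c(0)$, contradicts the positivity of $u$) gives $w_{\lm_0}>0$ strictly in $\Om_{\lm_0}\tm\R$. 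Combining strict positivity on a suitable compact $K\subset\Om_{\lm_0}$ with the narrow region principle applied to the thin strip $(\Om_{\lm_0+\ve}\setminus K)\tm\R$ for small $\ve>0$ allows one to push the plane slightly past $\lm_0$, contradicting its definition. Hence $\lm_0=0$; performing the identical argument from the right yields $w_0\equiv 0$. Since $x_1$ was arbitrary, $u(\cdot,t)$ is radial about the origin, and strict monotonicity on $B_1(0)\cap\{x_1<0\}$ follows from $w_\lm>0$ for every $\lm\in(-1,0)$.

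The principal obstacle is the narrow region/limit step: because $\pl_t^\al$ is one-sided and $t$ ranges over all of $\R$, one cannot simply pick a minimiser of $w_\lm$ and apply pointwise maximum principles. The time-translation/compactness passage from $(x_k,t_k)$ to a limiting $w_\lm^\infty$ attaining the infimum is the delicate step, because one must preserve both the equation and the exterior/antisymmetric structure in the limit; this requires controlling the Marchaud integral tails in $\cL^{-}_\al(\R)$ and the fractional Laplacian tails in $\cL_{2s}$ under uniform convergence, which is where cut-off functions $\eta_k$ scaled as in the introduction enter, together with the translation and rescaling invariance \eqref{TS} yielding $\cL\eta_k\lesssim r^{-\beta}$. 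Once this passage is secured, \textbf{Observation~A} and the antisymmetric lower bound on $(-\Delta)^s w_\lm^\infty$ combine to produce the desired sign contradiction, exactly as one would obtain for a minimum attained at a finite point.
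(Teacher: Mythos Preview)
Your narrow-region step (time-translation plus Arzel\`a--Ascoli, then Observation~A and the antisymmetric lower bound for $(-\Delta)^s$) is a legitimate alternative to the paper's perturbation proof of Theorem~\ref{thm1}. The genuine gap is in your ``push the plane'' step. To apply the narrow region principle on $(\Om_{\lm_0+\ve}\setminus K)\times\R$ you need the exterior condition $w_{\lm_0+\ve}\ge 0$ on $K\times\R$, which in turn requires a \emph{uniform} lower bound $\inf_{K\times\R}w_{\lm_0}>0$. Pointwise strict positivity $w_{\lm_0}(x,t)>0$ does not give this, because $t$ ranges over all of $\R$. If you try to upgrade to a uniform bound by the same time-translation/compactness device, you take $(x_k,t_k)\in K\times\R$ with $w_{\lm_0}(x_k,t_k)\to 0$, pass to limits $\tilde w,\tilde u$, and obtain $\tilde w\equiv 0$ on $\R^n\times(-\infty,0]$ via Observation~B. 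But the limit $\tilde u$ of $u(\cdot,\cdot+t_k)$ need not inherit positivity: nothing prevents $\tilde u\equiv 0$, and then there is no contradiction with the exterior condition. Your outline never invokes the hypothesis $f'(0)\le 0$, and this is exactly what it is there for.

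The paper's Step~2 proceeds differently and avoids this trap. Instead of first proving $w_{\lm_0}>0$ and then pushing, it takes $\lm_k\searrow\lm_0$ with $\inf w_{\lm_k}=-m_k<0$, uses a time-perturbation $v_k=w_{\lm_k}-m_k^2\eta_k(t)$ to produce genuine minimisers $(\bar x^k,\bar t_k)$, and from the resulting inequality
\[
-c_{\lm_k}(\bar x^k,\bar t_k)-Cm_k\le -\fr{C(1-m_k)}{\operatorname{dist}(\bar x^k,T_{\lm_k})^{2s}}
\]
deduces $c_{\lm_k}(\bar x^k,\bar t_k)\ge C_0>0$. Since $c_{\lm_k}=f'(\xi_k)$ with $\xi_k$ between $u_{\lm_k}$ and $u$, the assumption $f'(0)\le 0$ forces $u(\bar x^k,\bar t_k)\ge C_1>0$; hence the time-translated limit $\tilde u$ satisfies $\tilde u(x^0,0)\ge C_1>0$, and then $f(0)\ge 0$ plus a strong maximum principle for $\tilde u$ gives $\tilde u(\cdot,0)>0$ in $B_1(0)$. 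This is what makes $\tilde w\equiv 0$ on $\R^n\times(-\infty,0]$ contradict the exterior condition. To repair your argument you must insert precisely this mechanism (or an equivalent one) that prevents the translated limits from degenerating to zero.
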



The Theorem \ref{thm2} is proved by using  the direct method of moving plane  for dual fractional operators $\partial_t^\alpha+(-\Delta)^s$, which primarily relies on the following narrow region  principle 
 for anti-symmetric functions.

\begin{theorem}\label{thm1}
Let $\Omega$ be a bounded domain containing in the slab $\{x\in\Sigma_{\lambda}\mid\lambda-l<x_1<\lambda\}$.
Assume that $w(x,t)\in\left(\cL_{2s}\cap C^{1,1}_{loc}(\Omega)\right)\times\left(C^1(\R)\cap \cL^{-}_\alpha(\R)\right)$ is bounded from below in $\overline{\Omega}\times \R$ and for each $t\in \R,$ $w(\cdot,t)$ is lower semi-continuous up to the boundary $\partial \Omega$. Suppose
\begin{equation}\label{1.1}
\left\{
\begin{array}{ll}
    \partial^\alpha_t w(x,t)+(-\Delta)^s w(x,t)=c(x,t)w(x,t) ,~   &(x,t) \in  \Omega\times\mathbb{R}  , \\
  w(x,t)\geq 0 , ~ &(x,t)  \in (\Sigma_\lambda\backslash\Omega)\times\mathbb{R},\\
  w(x,t)=-w(x^\lambda,t) , ~ &(x,t)  \in \Sigma_\lambda\times\mathbb{R}.
\end{array}
\right.
\end{equation}
where the coefficient function $c(x,t)$ is bounded from above.\\
Then
\begin{equation}\label{1.2}
w(x,t)\geq 0~\mb{in}~\Sigma_\lambda\tm\R,
\end{equation}
for sufficiently small $l$. Furthermore, if $w(x,t)$ vanishes at some point $(x^0,t_0)\in\Omega\tm\R,$ then
\begin{equation}\label{1.3}
w(x,t)\equiv0~\mb{in}~\R^n\tm(-\infty,t_0].
\end{equation}

\end{theorem}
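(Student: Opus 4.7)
The plan is to argue by contradiction and perform the standard direct-method computation at the minimum of an appropriately perturbed antisymmetric function. Suppose \eqref{1.2} fails. Since $w\geq 0$ on $(\Sm_\lm\setminus\Om)\tm\R$ and $w$ is bounded from below, $A:=\inf_{\Om\tm\R}w$ is finite and strictly negative. Choose a minimizing sequence $(x^k,t^k)\in\Om\tm\R$ with $w(x^k,t^k)\to A$; since $\Om$ is bounded we may take $x^k\to x^*\in\overline{\Om}$. Because $t^k$ may be unbounded, $A$ need not be attained, and I would follow the perturbation strategy highlighted in the introduction: construct an antisymmetric cut-off $\eta_k(x,t)$ from a standard bump $\eta\in C_c^\infty$ by scaling it in $x$ with parameter $r_k$ and in $t$ with parameter $r_k^{2s/\al}$, centering it near $(x^k,t^k)$, and antisymmetrizing across $T_\lm$. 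The two scales are matched so that the Translation and Rescaling Invariance \eqref{TS} yields
\[|(-\Delta)^s\eta_k(x,t)|+|\pl_t^\al\eta_k(x,t)|\lesssim r_k^{-2s}.\]
Choosing $\ve_k\downarrow 0$ slightly larger than $w(x^k,t^k)-A$ and setting $\tilde w_k:=w-\ve_k\eta_k$ keeps $\tilde w_k$ antisymmetric in $x$, while the compact support of $\eta_k$ forces its (strictly negative) infimum over $\Sm_\lm\tm\R$ to be attained at some $(\bar x^k,\bar t^k)\in\Om\tm\R$.

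At this attained minimum, Observation~A gives $\pl_t^\al\tilde w_k(\bar x^k,\bar t^k)\leq 0$. For the spatial non-local term, splitting $\R^n=\Sm_\lm\cup\Sm_\lm^c$ and using antisymmetry yields the familiar identity
\begin{align*}
(-\Delta)^s\tilde w_k(\bar x^k,\bar t^k)
&= C_{n,s}\jf_{\Sm_\lm}\bigl(\tilde w_k(\bar x^k,\bar t^k)-\tilde w_k(y,\bar t^k)\bigr)\lt[\fr{1}{|\bar x^k-y|^{n+2s}}-\fr{1}{|\bar x^k-y^\lm|^{n+2s}}\rt]dy\\
&\quad+2C_{n,s}\,\tilde w_k(\bar x^k,\bar t^k)\jf_{\Sm_\lm}\fr{dy}{|\bar x^k-y^\lm|^{n+2s}}.
\end{align*}
The first integrand is non-positive because the bracket is non-negative for $\bar x^k\in\Sm_\lm$ and $\bar x^k$ is a spatial minimum of $\tilde w_k$, while the tail integral in the second term is bounded below by $C/l^{2s}$, exploiting the narrowness of $\Om$. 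Consequently $(-\Delta)^s\tilde w_k(\bar x^k,\bar t^k)\leq C l^{-2s}\tilde w_k(\bar x^k,\bar t^k)<0$. Substituting into the equation $\pl_t^\al w+(-\Delta)^s w=cw$, using the perturbation bound above together with $c\leq C_0$, dividing through by the negative number $\tilde w_k(\bar x^k,\bar t^k)$, and letting $k\to\infty$ (so the $\ve_k r_k^{-2s}$ error vanishes), one arrives at $C_0\geq C/l^{2s}$, which contradicts the smallness of $l$.

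For the strong version \eqref{1.3}, assume $w\geq 0$ on $\Sm_\lm\tm\R$ and $w(x^0,t_0)=0$ at some $(x^0,t_0)\in\Om\tm\R$; now $(x^0,t_0)$ is an attained global minimum and no perturbation is required. Observation~A gives $\pl_t^\al w(x^0,t_0)\leq 0$, and the antisymmetric identity above with $w(x^0,t_0)=0$ reduces to
\[(-\Delta)^s w(x^0,t_0)=-C_{n,s}\jf_{\Sm_\lm}w(y,t_0)\lt[\fr{1}{|x^0-y|^{n+2s}}-\fr{1}{|x^0-y^\lm|^{n+2s}}\rt]dy\leq 0,\]
since the bracket is non-negative and $w(\cdot,t_0)\geq 0$ on $\Sm_\lm$. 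The equation $\pl_t^\al w+(-\Delta)^s w=cw=0$ forces both terms to vanish. Vanishing of $(-\Delta)^s w(x^0,t_0)$ together with a non-positive integrand forces $w(\cdot,t_0)\equiv 0$ on $\Sm_\lm$, and antisymmetry propagates this to $\R^n$. Vanishing of $\pl_t^\al w(x^0,t_0)$ combined with Observation~B gives $w(x^0,\tau)\equiv 0$ for every $\tau<t_0$; each such $\tau$ then provides a new zero of the non-negative $w$, and repeating the spatial argument at time $\tau$ yields $w(\cdot,\tau)\equiv 0$ on $\R^n$ for all $\tau\leq t_0$, establishing \eqref{1.3}.

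The main obstacle is the unbounded time variable: because $t\in\R$, the infimum over $\Om\tm\R$ need not be attained, so the classical elliptic moving-planes computation cannot be applied directly. The technical heart of the proof is therefore the construction of the perturbation $\eta_k$ that simultaneously preserves the antisymmetry in $x$ required by the fractional Laplacian and is compatible with the one-sided, causal structure of $\pl_t^\al$, with the two scales $r_k$ and $r_k^{2s/\al}$ carefully matched so that $(-\Delta)^s\eta_k$ and $\pl_t^\al\eta_k$ are simultaneously controlled by the single small parameter $r_k^{-2s}$.
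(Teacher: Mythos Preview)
Your overall strategy is sound and the strong-minimum part for \eqref{1.3} matches the paper exactly. The perturbation you propose for \eqref{1.2}, however, imports the machinery of the unbounded-domain result (Theorem~\ref{thm3}) into a setting where it is not needed, and this is where your argument differs from the paper's.

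Because $\Omega$ is bounded and $w(\cdot,t)$ is lower semi-continuous on $\overline\Omega$, the spatial infimum is already attained for each fixed $t$; only the time direction is unbounded. The paper therefore perturbs \emph{only in $t$}, with a fixed, unscaled bump: $v_k(x,t)=w(x,t)-\varepsilon_k\,\eta(t-t_k)$, where $\eta\in C_c^\infty(-1,1)$ and $\varepsilon_k=m-m_k\to 0$. This $v_k$ is not antisymmetric in $x$, but since $\eta$ is $x$-independent one still has $v_k(y,\bar t_k)+v_k(y^\lambda,\bar t_k)=-2\varepsilon_k\eta_k(\bar t_k)$, and the key estimate $(-\Delta)^s v_k(\bar x^k,\bar t_k)\le -Cm_k/l^{2s}$ drops out of the usual reflection computation. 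The time error is simply $\varepsilon_k\,\partial_t^\alpha\eta_k(\bar t_k)=O(\varepsilon_k)$, since $\eta$ is never rescaled.

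Your construction instead introduces a spatial scale $r_k$, antisymmetrizes the bump across $T_\lambda$, and ties the time scale to $r_k^{2s/\alpha}$. That is precisely the device used in Theorem~\ref{thm3}, where localization in space is genuinely required and $r_k=\mathrm{dist}(x^k,T_\lambda)$ is forced on you. Here it is superfluous, and it leaves a loose end: you never specify $r_k$, yet your limit step ``so the $\varepsilon_k r_k^{-2s}$ error vanishes'' needs $\varepsilon_k r_k^{-2s}\to 0$. With a reasonable choice (e.g.\ $r_k$ a fixed constant) your argument does go through, but the paper's time-only perturbation sidesteps the issue entirely and is the cleaner route for the bounded narrow region.
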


It is worth noting that in  theorem \ref{thm1}, $\Omega$ is  a bounded narrow domain within $\Sigma_\lm$ and $c(x,t)$ is just  bounded from above.  However for the whole unbounded region $\Sigma_\lm$ restricted to $w>0$, espectially when $c(x,t)$ is nonpositive, we will also have the second Maximum Principle for anti-symmetric functions with respect to $x$. This serves as   a  fundamental  tool in estabishing the Liouville theorem for the dual fractional operator $\partial_t^\alpha+(-\Delta)^s$.
\begin{theorem}\label{thm3}
Assume that $w(x,t)\in\left(\cL_{2s}\cap C^{1,1}_{loc}(\Sigma_\lm)\right)\times\left(C^1(\R)\cap \cL^{-}_\alpha(\R)\right)$ is bounded from above in $\Sigma_\lm\times \R$ 
and satisfies
\begin{equation}\label{2.1}
\left\{
\begin{array}{ll}
    \partial^\alpha_t w(x,t)+(-\Delta)^s w(x,t)\leq 0 ,~ &\mbox{in}\ \ \{(x,t)\in \Sigma_\lambda\times\R\ | \ w(x,t)>0\}\,, \\[0.05cm]
  w(x,t)=-w(x^\lambda,t) , ~ & \mbox{in}\ \ \ \Sigma_\lambda\times\mathbb{R}.
\end{array}
\right.
\end{equation}
Then
\begin{equation}\label{2.2}
w(x,t)\leq 0~\mb{in}~\Sigma_\lambda\tm\R.
\end{equation}
\end{theorem}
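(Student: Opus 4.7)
The plan is to argue by contradiction. Suppose $M := \sup_{\Sigma_\lm \tm \R} w > 0$; we aim to produce an interior point at which $w > 0$ and yet $\pl_t^\al w + (-\Delta)^s w > 0$, contradicting \eqref{2.1}. Fix a maximizing sequence $(x^k, t_k) \in \Sigma_\lm \tm \R$ with $w(x^k, t_k) \to M$. Since both operators are invariant under translations in $t$ and in the tangential directions $x' = (x_2, \ldots, x_n)$, and such translations preserve antisymmetry with respect to $T_\lm$ as well as the inequality \eqref{2.1}, we may replace $w$ by the shifted family $w_k(x,t) := w(x + (0, x_2^k, \ldots, x_n^k), t + t_k)$, reducing to a maximizing sequence of the form $((x_1^k, 0, \ldots, 0), 0)$.

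To handle the unboundedness of $\Sigma_\lm \tm \R$ and force the supremum to be attained, we introduce an antisymmetric cut-off perturbation in the spirit of the family $\eta_k$ emphasized in the introduction. Let $\eta \in C_c^\infty(\R^n)$ and $\zeta \in C_c^\infty(\R)$ be standard bumps equal to $1$ near the origin and supported in $B_2(0)$ and $(-2,2)$ respectively, and set
\[ \psi_k(x,t) := \zeta\lt(\fr{t}{k}\rt)\lt[\eta\lt(\fr{x - x^k}{k}\rt) - \eta\lt(\fr{x^\lm - x^k}{k}\rt)\rt]. \]
By construction $\psi_k$ is antisymmetric with respect to $T_\lm$, compactly supported, equal to $1$ at $(x^k, 0)$, and the translation-and-rescaling identity \eqref{TS} yields the decay estimates
\[ \Norm{(-\Delta)^s \psi_k}_\infty \lesssim k^{-2s}, \qquad \Norm{\pl_t^\al \psi_k}_\infty \lesssim k^{-\al}. \]
Choosing $\ve_k \downarrow 0$ so that $W_k((x_1^k, 0), 0) > M$ for $W_k := w_k + \ve_k \psi_k$, we observe that $W_k$ is still antisymmetric with respect to $T_\lm$, coincides with $w_k \leq M$ outside the compact support of $\psi_k$, and therefore attains its supremum at some interior point $(\bar x^k, \bar t^k)$ at which $w_k(\bar x^k, \bar t^k) > 0$.

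At this attained positive maximum, Observation~A applied to $W_k(\bar x^k, \cdot)$ gives $\pl_t^\al W_k(\bar x^k, \bar t^k) \geq 0$, while the standard antisymmetric splitting of the singular integral defining $(-\Delta)^s$, together with $\abs{\bar x^k - y} \leq \abs{\bar x^k - y^\lm}$ for $y \in \Sigma_\lm$, produces the Hopf-type lower bound
\[ (-\Delta)^s W_k(\bar x^k, \bar t^k) \geq 2 C_{n,s}\, W_k(\bar x^k, \bar t^k)\int_{\Sigma_\lm^c} \fr{dy}{\abs{\bar x^k - y}^{n+2s}} \geq c\,(\lm - \bar x_1^k)^{-2s}\, W_k(\bar x^k, \bar t^k). \]
On the other hand, applying \eqref{2.1} to $w_k = W_k - \ve_k \psi_k$ and using the cut-off estimates gives
\[ \pl_t^\al W_k(\bar x^k, \bar t^k) + (-\Delta)^s W_k(\bar x^k, \bar t^k) \leq C \ve_k (k^{-\al} + k^{-2s}), \]
which tends to $0$ as $k \to \infty$. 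Comparing these two bounds forces a contradiction, provided $\lm - \bar x_1^k$ remains bounded from above.

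The principal obstacle is precisely this last proviso: excluding the degenerate case $\bar x_1^k \to -\infty$, in which the Hopf-type lower bound degenerates. I expect to handle it by a further translation in the $x_1$-direction: centering the functions at $\bar x^k$ and passing to a locally uniform limit (justified by interior regularity for $\pl_t^\al + (-\Delta)^s$) yields a bounded limit $w_\infty$ on $\R^n \tm \R$ attaining its supremum $M$ at the origin; strict positivity of $(-\Delta)^s$ at a global maximum then forces $w_\infty \equiv M$, while the antisymmetry of the pre-limit $w_k$ with respect to $T_{\lm - \bar x_1^k}$ simultaneously forces $w$ to take values near $-M$ along a reflected sequence tending to $+\infty$ in $x_1$; playing these conflicting asymptotics against the global boundedness of $w$ produces the final contradiction.
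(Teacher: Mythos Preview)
Your argument has a genuine gap, and it is precisely the ``principal obstacle'' you identify: the degeneration when $\lambda-\bar x_1^k\to\infty$. Your proposed fix via a limiting argument is problematic. After translating in $x_1$, the plane $T_\lambda$ becomes $T_{\lambda-\bar x_1^k}$, which escapes to $+\infty$; in the local limit the antisymmetry constraint disappears entirely, so there is no mechanism forcing $w_\infty$ to take values near $-M$. Moreover, the compactness needed to pass to a limit is not provided by the hypotheses (you only have $C^{1,1}_{\mathrm{loc}}\times C^1$, not uniform interior estimates for $\partial_t^\alpha+(-\Delta)^s$), and the differential inequality \eqref{2.1} is only assumed on the positivity set, making its stability under limits delicate. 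There is also a smaller issue: with your scale $k$ growing to infinity, the identity $\psi_k(x^k,0)=1$ requires $\mathrm{dist}(x^k,T_\lambda)>k$, which you have no control over.

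The obstacle is in fact self-inflicted by the choice of scale. The paper's key idea is to take the perturbation radius to be $r_k=\mathrm{dist}(x^k,T_\lambda)$ itself, with spatial scale $r_k/2$ and temporal scale $(r_k/2)^{2s/\alpha}$, so that the cut-off is supported in $B_{r_k/2}(x^k)\subset\Sigma_\lambda$ (no antisymmetrization needed) and, by \eqref{TS}, both $\partial_t^\alpha\eta_k$ and $(-\Delta)^s\eta_k$ are $O(r_k^{-2s})$. The maximum $(\bar x^k,\bar t_k)$ then lies in $B_{r_k/2}(x^k)$, so $\mathrm{dist}(\bar x^k,T_\lambda)\sim r_k$ automatically, and the Hopf-type lower bound becomes $\geq c(A-\varepsilon_k)r_k^{-2s}$. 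The point is that \emph{both} sides carry the same factor $r_k^{-2s}$; it cancels, leaving $A-\varepsilon_k\leq C\varepsilon_k$, which is the contradiction. No case analysis, no limits. Replacing your scale $k$ by $r_k$ (and matching the temporal scale so that the two error terms balance) closes the gap immediately.
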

Since $w(x,t)=u(x^\lm,t)-u(x,t)$ is an anti-symmetric function with respect to  $x$,  Theorem \ref{thm3} only yields that a bounded entire solution $u(x,t)$ of homogeneous equation associated with the operator  $\partial_t^\alpha+(-\Delta)^s$ in the whole space $\R^n\tm\R$ must be constant with respect to the spatial variable $x$, i.e. $u(x,t)=u(t)$. To further show that it is also a constant with respect to the time variable $t$, it suffices for us  to establish a Liouville theorem involving a one-sided Marchaud fractional time operator $\partial^\alpha_t$ as the following.
\begin{theorem}\label{thm5}
Let $u(t)\in C^1(\R)$ be a  bounded solution of
\begin{equation}\label{anti1}
\partial^\alpha_t u(t) = 0\ \ \mbox{in}\ \ \R .
\end{equation}
Then it must be  constant.
\end{theorem}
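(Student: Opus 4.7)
My approach is by contradiction, combining Observations $\mathbf{A}$ and $\mathbf{B}$ with a perturbation applied directly to $u$, and exploiting the translation and rescaling invariance $\partial_t^\alpha[\eta((\cdot-t_0)/R)](t)=R^{-\alpha}(\partial_t^\alpha\eta)((t-t_0)/R)$, which yields $|\partial_t^\alpha[\eta((\cdot-t_0)/R)]|\lesssim R^{-\alpha}$ for any cutoff $\eta\in C_c^\infty(\R)$. Suppose $u$ is not constant; then $m:=\inf_\R u<\sup_\R u=:M$, and after subtracting $m$ I may assume $m=0$ and $M>0$.

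\emph{Core case: $\inf u$ is attained.} Say $u(\bar t)=0$; by time-translation invariance we take $\bar t=0$. Observation $\mathbf{A}$ gives $\partial_t^\alpha u(0)\le 0$, and together with the equation $\partial_t^\alpha u(0)=0$ and Observation $\mathbf{B}$ this forces $u\equiv 0$ on $(-\infty,0]$. Suppose for contradiction that $u\not\equiv 0$ on $[0,\infty)$. Then $\sup u=M$ cannot be attained (else Observation $\mathbf{B}$ would give $u\equiv M$ on a left half-line, contradicting $u(0)=0$), so there is $s_k\to+\infty$ with $u(s_k)\to M$. Fix $\eta\in C_c^\infty(\R)$ with $0\le\eta\le 1$, $\eta(0)=1$, $\mb{supp}\,\eta\subset[-2,2]$, and set
\[v_k(t):=u(t)+\epsilon_k\,\eta\lt(\fr{t-s_k}{s_k}\rt),\qquad \epsilon_k:=2(M-u(s_k))\to 0^+.\]
Then $v_k(s_k)=2M-u(s_k)>M\ge v_k(t)$ for $t\notin[-s_k,3s_k]$, so $\sup v_k$ is attained at some $\bar t_k\in[-s_k,3s_k]$ with $v_k(\bar t_k)>M$. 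The rescaling identity yields
\[\partial_t^\alpha v_k(\bar t_k)=\epsilon_k s_k^{-\alpha}(\partial_t^\alpha\eta)\lt(\fr{\bar t_k-s_k}{s_k}\rt)\le C_\eta\,\epsilon_k\,s_k^{-\alpha},\quad C_\eta:=\|\partial_t^\alpha\eta\|_{L^\infty([-2,2])}.\]
On the other hand, since $u\equiv 0$ on $(-\infty,0]$ and the perturbation vanishes on $(-\infty,-s_k]$, one has $v_k\equiv 0$ on $(-\infty,-s_k]$, and Observation $\mathbf{A}$ (applied at the maximum $\bar t_k$) gives $v_k(\bar t_k)-v_k(\tau)\ge 0$ for $\tau\in(-s_k,\bar t_k)$. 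Hence
\[\partial_t^\alpha v_k(\bar t_k)\ge C_\alpha\jf_{-\infty}^{-s_k}\fr{v_k(\bar t_k)}{(\bar t_k-\tau)^{1+\alpha}}\,d\tau=\fr{v_k(\bar t_k)}{\Gamma(1-\alpha)(\bar t_k+s_k)^\alpha}\ge\fr{M}{\Gamma(1-\alpha)(4s_k)^\alpha}.\]
Comparing the two bounds forces $\epsilon_k\ge M/(\Gamma(1-\alpha)\cdot 4^\alpha\cdot C_\eta)>0$, a fixed constant, contradicting $\epsilon_k\to 0$. Hence $u\equiv 0$.

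\emph{Reduction to the core case.} If $\inf u$ is not attained, I would pick $t_k$ with $u(t_k)\to 0$ and $|t_k|\to\infty$, and apply the analogous downward perturbation $v_k(t)=u(t)-\epsilon_k\eta((t-t_k)/|t_k|)$ with $\epsilon_k=2u(t_k)\to 0^+$, so that $v_k$ attains a strict global minimum at some $\bar t_k$ with $v_k(\bar t_k)<0$. The rescaling identity again gives $|\partial_t^\alpha v_k(\bar t_k)|\le C_\eta\epsilon_k|t_k|^{-\alpha}$; the matching lower bound comes from an anchor region on which $u$ is uniformly bounded away from $0$. A case analysis on $\liminf_{\tau\to\pm\infty}u(\tau)$ shows that such an anchor exists automatically whenever at least one of these liminfs is positive (yielding the same rescaling-versus-lower-bound contradiction), and if both liminfs vanish then $\sup u=M$ cannot be attained either, in which case applying the same perturbation idea to $M-u$ produces the needed anchor and completes the reduction.

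\textbf{Main obstacle.} The genuinely delicate step is this reduction. In the core case the half-line anchor $u\equiv 0$ on $(-\infty,0]$ produces a lower bound with the \emph{fixed} prefactor $M$, and the rescaling invariance then forces a clean contradiction against $\epsilon_k\to 0$. When no extremum is attained, the natural anchor supplied by the far-field behavior of $u$ only gives a prefactor proportional to $u(t_k)\to 0$, i.e.\ of the same order as $\epsilon_k$, so the contradiction no longer emerges automatically and a further $\liminf/\limsup$ dichotomy at $\pm\infty$ is required. This asymmetry --- reflecting the one-sided nature of $\partial_t^\alpha$ and the failure of the reflection invariance \eqref{R} --- is precisely what dictates the two-step structure of the proof and the decision to perturb $u(t)$ itself rather than an anti-symmetric function of $u$.
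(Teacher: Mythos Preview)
Your core case (infimum attained) is correct and, in fact, handles more than the paper's Cases~1 and~2 combined: the single upward bump at a maximizing sequence $s_k\to+\infty$, anchored on the left half-line where $u\equiv 0$, yields a clean rescaling contradiction. This part is fine.

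The genuine gap is in your reduction. Your case analysis on $\liminf_{\tau\to\pm\infty}u(\tau)$ overlooks that the operator $\partial_t^\alpha$ sees only the \emph{past}: an anchor is useful only if it lies to the \emph{left} of the perturbed minimum $\bar t_k$. Thus ``$\liminf_{+\infty}u>0$'' gives an anchor on the right, which contributes nothing to $\partial_t^\alpha v_k(\bar t_k)$; only ``$\liminf_{-\infty}u>0$'' helps. Consequently your dichotomy collapses to: either $\liminf_{-\infty}u>0$ (direct), or $\liminf_{-\infty}u=0$, in which case you pass to $M-u$. But the latter succeeds only if $\liminf_{-\infty}(M-u)>0$, i.e.\ $\limsup_{-\infty}u<M$. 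In the genuinely oscillatory situation
\[
\liminf_{\tau\to-\infty}u(\tau)=0\quad\text{and}\quad\limsup_{\tau\to-\infty}u(\tau)=M,
\]
\emph{neither} $u$ nor $M-u$ admits a left anchor, and your single-bump argument stalls.

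This is exactly the scenario the paper treats in its Case~3, and the missing idea is a \emph{double-bump} perturbation: one chooses $\bar t_k,\underline t_k\to-\infty$ with $u(\bar t_k)\to M$, $u(\underline t_k)\to m$, sets $r_k=\tfrac14|\bar t_k-\underline t_k|$ and
\[
v_k(t)=u(t)+\varepsilon_k\Big[\eta\Big(\tfrac{t-\bar t_k}{r_k}\Big)-\eta\Big(\tfrac{t-\underline t_k}{r_k}\Big)\Big],
\]
so that $v_k$ simultaneously attains a maximum at some $\bar s_k$ near $\bar t_k$ and a minimum at some $\underline s_k$ near $\underline t_k$. Writing $\partial_t^\alpha v_k(\bar s_k)\ge C_\alpha\int_{-\infty}^{\underline s_k}\frac{v_k(\bar s_k)-v_k(\tau)}{(\bar s_k-\tau)^{1+\alpha}}\,d\tau$ and splitting off the constant $v_k(\bar s_k)-v_k(\underline s_k)\ge M-m$ yields
\[
\partial_t^\alpha v_k(\bar s_k)\;\ge\;\frac{C_0}{r_k^{\alpha}}+\partial_t^\alpha v_k(\underline s_k),
\]
while the equation and rescaling give $|\partial_t^\alpha v_k(\bar s_k)|,\,|\partial_t^\alpha v_k(\underline s_k)|\le C\varepsilon_k r_k^{-\alpha}$, forcing $C_0\le C\varepsilon_k\to0$. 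The point is that the created minimum $\underline s_k$ serves as the anchor, and its contribution is controlled not by a pointwise lower bound on $u$ but by the equation itself via $\partial_t^\alpha v_k(\underline s_k)$. This bypasses the need for any far-field $\liminf$ information and is precisely what your single-bump scheme cannot reproduce.
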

As an immediate applications of the maximum principle in unbounded domains as stated in Theorem \ref{thm3} and the Liouville Theorem for the Marchaud  operator $\partial^\alpha_t$ in $\R$, Theorem \ref{thm5}, we derive  the second main result in this paper ---  Liouville Theorem for the dual fractional operator $\partial_t^\alpha+(-\Delta)^s$ in the whole space.
\begin{theorem}\label{thm4}
Let $u(x,t)\in C_{loc}^{1,1}(\R^n)\times C^1(\R) $ be a   bounded solution of
\begin{equation}\label{2.9}
\partial^\alpha_t u(x,t)+(-\Delta)^s u(x,t) = 0\ \ \mbox{in}\ \ \R^n\times\R .
\end{equation}
Then it must be  constant.
\end{theorem}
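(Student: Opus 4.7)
My plan is to combine the two main ingredients established in the preceding results: the maximum principle in unbounded domains for antisymmetric functions (Theorem \ref{thm3}) and the Liouville theorem for the Marchaud operator (Theorem \ref{thm5}). In the first step I will use the direct method of moving planes together with Theorem \ref{thm3} to reduce equation \eqref{2.9} to the one-dimensional Marchaud equation \eqref{anti1}; in the second step the conclusion follows at once from Theorem \ref{thm5}.

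For the first step, I fix an arbitrary spatial direction, which I relabel as $x_1$, and let $\lambda\in\R$. Set $w_\lambda(x,t):=u(x^\lambda,t)-u(x,t)$. By the reflection invariance \eqref{R} of the fractional Laplacian and the fact that $\partial_t^\alpha$ does not act on $x$, the reflected function $u(x^\lambda,t)$ is again a solution of \eqref{2.9}; by linearity,
\begin{equation*}
\partial_t^\alpha w_\lambda(x,t)+(-\Delta)^s w_\lambda(x,t)=0 \quad \mbox{in } \R^n\times\R .
\end{equation*}
In particular, $\partial_t^\alpha w_\lambda+(-\Delta)^s w_\lambda\leq 0$ wherever $w_\lambda>0$. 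Since $u$ is bounded, $w_\lambda$ is bounded from above (and below) and antisymmetric with respect to $T_\lambda$, so Theorem \ref{thm3} applies and yields $w_\lambda\leq 0$ in $\Sigma_\lambda\times\R$. Applying the very same theorem to $-w_\lambda$, which enjoys the same antisymmetry, boundedness and differential inequality, gives the reverse conclusion $w_\lambda\geq 0$. Hence $w_\lambda\equiv 0$ in $\Sigma_\lambda\times\R$ for every $\lambda\in\R$, so $u(\cdot,t)$ is symmetric with respect to every hyperplane orthogonal to the $x_1$-axis, forcing it to be independent of $x_1$. Since the direction was arbitrary, $u(x,t)=u(t)$.

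Once $u$ depends on $t$ alone, the spatial term drops out, $(-\Delta)^s u\equiv 0$, and \eqref{2.9} reduces to $\partial_t^\alpha u(t)=0$ in $\R$; Theorem \ref{thm5} then immediately forces $u$ to be constant. The structural part of the argument is short; the genuine difficulties are embedded in the two building blocks themselves, especially in Theorem \ref{thm3} which must accommodate the one-sidedness of $\partial_t^\alpha$ in an unbounded region, and in Theorem \ref{thm5} whose proof uses the direct perturbation of $u(t)$ advertised in the introduction. The only technical verification I expect at this stage is that $w_\lambda$ belongs to the regularity class $(\cL_{2s}\cap C^{1,1}_{loc})\times (C^1\cap\cL^{-}_\alpha)$ required by Theorem \ref{thm3}; this is routine, since the hypothesized regularity and boundedness of $u$ make the weighted $L^1$ conditions defining $\cL_{2s}$ and $\cL^{-}_\alpha$ automatic.
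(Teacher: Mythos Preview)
Your proposal is correct and follows essentially the same route as the paper: reduce to $u(x,t)=u(t)$ via Theorem~\ref{thm3} applied to $w_\lambda$ (and to $-w_\lambda$) for every hyperplane, then invoke Theorem~\ref{thm5} on the resulting one-dimensional equation. The paper compresses the step ``$w_\lambda\le 0$ and $-w_\lambda\le 0$'' into the single line ``it follows from Theorem~\ref{thm3} that $w_\lambda\equiv 0$'', but your explicit two-sided application and your remark on the regularity/boundedness check are exactly the missing justifications behind that line.
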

\begin{remark}The above theorem  can be
regarded as a generalization of the classical Liouville theorem for the  fractional elliptic and parabolic equation involving the Laplacian in the whole space, where the boundedness condition may not be optimal but is still reasonable. Relaxing this boundedness condition is the focus of our upcoming work.
\end{remark}
The remaining of this paper is organized as follows. In Sec.2,  we first  demonstrate two maximum
principle: the narrow domain principle (Theorem \ref{thm1}) and the maximum principle in unbounded
domains (Theorem \ref{thm3} ) applicable to the dual fractional operator $\partial^\alpha_t +(-\Delta)^s$.   Based on the narrow domain principle, we then carry out a direct method of
moving planes for the nonlocal operator $\partial^\alpha_t +(-\Delta)^s$ to prove the radial symmetry of solutions announced in Theorem \ref{thm2} in Sec.3.  Moving on to  Sec.4, we initially establish the Liouville theorem for the Marchaud  operator $\partial^\alpha_t$ (Theorem \ref{thm5}), and subsequently, in  combination with the maximum principle in unbounded
domains developed in Sec.2, we prove the Liouville Theorem for the dual fractional operator $\partial_t^\alpha+(-\Delta)^s$ as stated in Theorem \ref{thm4}. Throughout this
paper, we use $C$ to denote a general constant whose value may vary from line to line.

\section{Maximum Principles for Antisymmetric functions}
In this section, we will demonstrate various maximum principles for antisymmetric functions, including Theorem \ref{thm1} and Theorem \ref{thm3}. We will explain in the subsequent part how these principles play vital roles in carrying out a direct method of moving planes to establish the symmetry and monotonicity of solutions.
\subsection{Narrow region principle in bounded domains}
Our first key tool is  a narrow region principle for antisymmetric functions in bounded domains, which plays a crucial role in  deriving the radial symmetry and monotonicity of solutions for the dual fractional equation.
\begin{proof}[Proof of Theorem \ref{thm1}]
First we argue by contradiction to derive \eqref{1.2}. If not, since $\Omega$ is bounded, $w$ is bounded from below in $\Om\tm\R$ and $w(\cdot,t)$ is lower semi-continuous up to the boundary $\partial\Omega$ for each fixed $t\in\R$, there must exist $x(t)\in\Omega$ and  $m>0$ such that
\begin{equation}\label{1.4}\inf\limits_{(x,t)\in\Omega\tm\R}w(x,t)=\inf\limits_{t\in\R}w(x(t),t)=-m<0. \end{equation}
Then there exists a minimizing sequence $\{t_k\}\subset \R$ and a sequence $\{m_k\}\nearrow m$  such that
\[w(x(t_k),t_k)=-m_k\searrow-m~as~k\to\infty.\]
Since the infimum of $w$ with respect to $t$ may not be attained, we need to perturb $w$ with respect to $t$ such that the infimum $-m$  can be attained by the perturbed function. For this purpose, we introduce the following auxiliary function
\[v_k(x,t)=w(x,t)-\varepsilon_k\eta_k(t),\]
where $\varepsilon_k=m-m_k$ and $\eta_k(t)=\eta(t-t_k)$ with $\eta\in C_0^\infty(-1,1)$, $ 0\leq\eta\leq1$ satisfying
 \begin{equation*}\eta( t)  = \begin{cases} 1, & \abs{t}\leq \frac{1}{2}, \\ 0, &  \abs{t}\geq 1. \end{cases}\end{equation*}
Clearly $supp \eta_k\subset (-1+t_k,1+t_k)$ and $\eta_k(t_k)=1$. By \eqref{1.4} and the exterior condition in \eqref{1.1}, we have
\begin{equation*}
\left.\begin{array}{r@{\ \ }c@{\ \ }ll}
v_k(x(t_k),t_k)&=&-m\,, \\[0.15cm]
v_k(x,t)=w(x,t)&\geq& -m\ \mbox{in}\ \Omega\tm(\R \backslash (-1+t_k,1+t_k))\,, \\[0.15cm]
v_k(x,t)\geq-\varepsilon_k\eta_k(t) &>& -m\ \mbox{in}\ (\Sigma_\lambda\backslash\Omega)\times \R\,. \\[0.05cm]
\end{array}\right.
\end{equation*}

Since $w$ is lower semi-continuous on $\overline\Omega\times \R$, then $v_k$ must attains its minimum value which is at most  $-m$ at $\Omega\times (-1+t_k,1+t_k)$, that is,
\begin{equation}\label{1.5}
\exists\ \{(\bar{{x}}^k,\bar{t}_k)\}\subset\Omega\times (-1+t_k,1+t_k)\   \ s.t.\ \     -m-\varepsilon_k\leq v_k(\bar{{x}}^k,\bar{t}_k)=\inf\limits_{\Sigma_\lambda\times\R}v_k(x,t)\leq -m.
\end{equation}
Consequently,
\[-m\leq w(\bar{x}^k,\bar{t}_k)\leq-m_k<0.\]
Now applying \eqref{1.5}, the definition of $v_k$ and the anti-symmetry of $w$ in $x$, we derive
\begin{equation*}
\left.\begin{array}{r@{\ \ }c@{\ \ }ll}
{\partial^\alpha_t v_k}(\bar{x}^k,\bar{t}_k)&=&C_\alpha\jf_{-\infty}^{\bar{t}_k}\fr{v_k(\bar{x}^k,\bar{t}_k)-v_k(\bar{x}^k,\tau)}
{(\bar{t}_k-\tau)^{1+\alpha}}d\tau\leq 0\,. \\[0.2cm]
(-\Delta)^s v_k(\bar{x}^k,\bar{t}_k)&=&C_{n,s}P.V.\jf_{\R^{n}}\frac{v_k(\bar{x}^k,\bar{t}_k)-v_k(y,\bar{t}_k)}{\abs{\bar{x}^k-y}^{n+2s}}dy \\[0.3cm]
&=&C_{n,s}P.V.\jf_{\Sigma_\lambda}\frac{v_k(\bar{x}^k,\bar{t}_k)-v_k(y,\bar{t}_k)}{\abs{\bar{x}^k-y}^{n+2s}}dy+C_{n,s}\jf_{\Sigma_\lambda}\frac{v_k(\bar{x}^k,\bar{t}_k)-v_k(y^\lambda,\bar{t}_k)}{\abs{\bar{x}^k-y^\lambda}^{n+2s}}dy \\[0.3cm]
&\leq&C_{n,s}\jf_{\Sigma_\lambda}\frac{2v_k(\bar{x}^k,\bar{t}_k)-v_k(y,\bar{t}_k)-v_k(y^\lambda,\bar{t}_k)}{\abs{\bar{x}^k-y^{\lambda}}^{n+2s}}dy
\\[0.3cm]
&=&2C_{n,s}w_k(\bar{x}^k,\bar{t}_k)\jf_{\Sigma_\lambda}\frac{1}{\abs{\bar{x}^k-y^{\lambda}}^{n+2s}}dy
\\
&\leq&-\fr{Cm_k}{l^{2s}}.\end{array}\right.
\end{equation*}
It follows that
\begin{equation}\label{1.6}\partial^\alpha_t v_k(\bar{x}^k,\bar{t}_k)+(-\Delta)^s v_k(\bar{x}^k,\bar{t}_k)\leq-\fr{Cm_k}{l^{2s}}.\end{equation}
In addition, substituting $v_k$ into the differential equation in \eqref{1.1} and using the assumption $c(x,t)\leq C_0$, we obtain
\begin{equation}\label{1.7}
\partial^\alpha_t v_k(\bar{x}^k,\bar{t}_k)+(-\Delta)^s v_k(\bar{x}^k,\bar{t}_k)=c(\bar{x}^k,\bar{t}_k)w(\bar{x}^k,\bar{t}_k)-\varepsilon_k\partial^\alpha_t\eta_k(\bar{t}_k)\geq-C_0m-C\varepsilon_k.
\end{equation}
Then a combination of \eqref{1.6} and \eqref{1.7} yields that
\[-C_0m\leq-\fr{Cm_k}{l^{2s}}+C\varepsilon_k\to-\fr{Cm}{l^{2s}},\]
as $k\to\infty,$ which is a contradiction for sufficiently small $l$. Hence we complete the proof of \eqref{1.2}.

Next, we show the validity of \eqref{1.3}. If $w(x,t)$ vanishes  at $(x^0,t_0)\in\Omega\tm\R,$ then by \eqref{1.2}, we derive that
\[w(x^0,t_0)=\min\limits_{\Sigma_\lambda\tm\R}w(x,t)=0.\]
The equation in \eqref{1.1} obviously implies that
\begin{equation}\label{1.8}\partial^\alpha_t w(x^0,t_0)+(-\Delta)^s w(x^0,t_0)=0.
\end{equation}
On the other hand, since $w(x,t)\geq 0$ in $\Sigma_\lambda\tm\R$ and $$\abs{x^0-y^{\lambda}}>\abs{x^0-y}~\mb{provided~} y\in\Sigma_{\lambda},$$ we obtain
\begin{equation}\label{1.9}
\left.\begin{array}{r@{\ \ }c@{\ \ }ll}
(-\Delta)^s w(x^0,t_0)&=&C_{n,s}P.V.\jf_{\R^{n}}\frac{-w(y,t_0)}{\abs{x^0-y}^{n+2s}}dy \\[0.3cm]
&=&C_{n,s}P.V.\jf_{\Sigma_\lambda}w(y,t_0)\lt[\frac{1}{\abs{x^0-y^{\lambda}}^{n+2s}}-\frac{1}{\abs{x^0-y}^{n+2s}}\rt]dy \\[0.3cm]&\leq&0
\end{array}\right.
\end{equation}
and
\begin{equation}\label{1.10}
\left.\begin{array}{r@{\ \ }c@{\ \ }ll}
\partial^\alpha_t w(x^0,t_0)&=&C_\alpha\jf_{-\infty}^{t_0}\frac{-w(x^0,\tau)}{(t_0-\tau)^{1+\alpha}}d\tau\leq0.\\[0.3cm]
\end{array}\right.
\end{equation}
So it follows from \eqref{1.8}, \eqref{1.9} and \eqref{1.10} 
that $$0=\partial^\alpha_t w(x^0,t_0)=C_\alpha\jf_{-\infty}^{t_0}\frac{-w(x^0,\tau)}{(t_0-\tau)^{1+\alpha}}d\tau,$$ then we must have
\[w(x^0,\tau)\equiv 0=\min_{\Sigma_\lambda\tm\R}w(x,t), \mb{~for~}\forall \tau\in (-\infty,t_0],\]
that is, for each $\tau\in(-\infty,t_0]$, $w(x,t)$  attains zero at $(x^0,\tau)\in\Omega\tm\R$.

Now, repeating the previous process, we further obtain
\begin{equation*}\label{1.9}
\left.\begin{array}{r@{\ \ }c@{\ \ }ll}
0=(-\Delta)^s w(x^0,\tau)
&=&C_{n,s}P.V.\jf_{\Sigma_\lambda}w(y,\tau)\lt[\frac{1}{\abs{x^0-y^{\lambda}}^{n+2s}}-\frac{1}{\abs{x^0-y}^{n+2s}}\rt]dy.
\end{array}\right.
\end{equation*}
Together with the anti-symmetry of $w(y,\tau)$ with respect to $y$, we derive
\[w(y,\tau)\equiv 0\mb{~for~}\forall y\in \R^n.\]
Therefore, \[w(y,\tau)\equiv 0 \mb{~in~} \R^n\tm(-\infty,t_0].\]
This completes the proof of Theorem \ref{thm1}. \end{proof}

\subsection{Maximum principle in unbounded domains}
We now prove Theorem \ref{thm3},  the maximum principle for antisymmetric functions in unbounded domains. This is also  an ensential ingredient in proving the Liouville theorem for the dual fractional operator.
\begin{proof}[Proof of Theorem \ref{thm3}]
We argue by  contradiction. If \eqref{2.2} is not true,  since $w(x,t)$ is bounded from above in $\Sigma_\lm\times \R$, then there exists a constant $A>0$ such that
\begin{equation}\label{2.3}
\sup\limits_{(x,t)\in \Sigma_\lm\times \R}w(x,t):=A>0.
\end{equation}
Since the domain $\Sigma_\lm\times\R$ is unbounded, the supremum of $w(x,t)$ may not be attained in $\Sigma_\lm\times\R$, however, by \eqref{2.3}, there exists a maximizing sequence $\{(x^k,t_k)\}\subset \Sigma_\lm\times\R$ such that
\[w(x^k,t_k)\to A\ \mbox{as}\ k\to \infty.\]
More accurately, there exists a  sequence $\{\varepsilon_k\}\searrow 0$ such that
\begin{equation}\label{2.4}
w(x^k,t_k)=A-\varepsilon_k>0.
\end{equation}
Now we introduce a perturbation  of $w$ near $(x^k,t_k)$ as following
\begin{equation}\label{2.5}
v_k(x,t)=w(x,t)+\varepsilon_k\eta_k(x,t) \ \mbox{in}\ \R^n\times \R,
\end{equation}
where
\[\eta_k(x,t)=\eta\lt(\fr{x-x^k}{r_k/2},\fr{t-t_k}{(r_k/2)^{2s/\alpha}}\rt),\]
with $r_k=dist(x_k,T_{\lm})>0$ and 
  $\eta\in C^\infty_0(\R^n\times\R)$ is a cut-off smooth function satisfying
\begin{equation*}
\left\{\begin{array}{r@{\ \ }c@{\ \ }ll}
0\leq \eta\leq 1 &\mbox{in}&\ \ \R^n\times\R\,, \\[0.05cm]
 \eta= 1 &\mbox{in}&\ \ B_{1/2}(0)\times[-\fr{1}{2},\fr{1}{2}]\,, \\[0.05cm]
\eta= 0 &\mbox{in}&\ \ \left(\R^n\times\R\right) \backslash\left(B_{1}(0)\times[-1,1]\right)\,. \\[0.05cm]
\end{array}\right.
\end{equation*}
Denote
\[Q_k(x^k,t_k):=B_{r_k/2}(x^k)\times\lt[t_k-\lt(\fr{r_k}{2}\rt)^{2s/\alpha}, t_k+\lt(\fr{r_k}{2}\rt)^{2s/\alpha}\rt]\subset\Sigma_\lm\tm\R.\]
 By \eqref{2.3}, \eqref{2.4} and \eqref{2.5}, we have
\begin{equation*}
\left.\begin{array}{r@{\ \ }c@{\ \ }ll}
v_k(x^k,t_k)&=&A\,, \\[0.15cm]
v_k(x,t)=w(x,t)&\leq& A\ \mbox{in}\ \left(\Sigma_\lm\times\R\right) \backslash Q_k(x^k,t_k)\,, \\[0.15cm]
v_k(x,t)=\varepsilon_k\eta_k(x,t) &<& A\ \mbox{on}\ \ T_\lm\tm\R\,. \\[0.05cm]
\end{array}\right.
\end{equation*}
Since $w$ is upper semi-continuous on $\overline{\Sigma}_\lm\times \R$, then $v_k$ must attains its maximum value which is at least  $A$ at $\overline{Q_k(x^k,t_k)}\subset{\Sigma}_\lm\times \R$, that is, 
\begin{equation}\label{2.6}
\exists\ \{(\bar{{x}}^k,\bar{t}_k)\}\subset\overline{Q_k(x^k,t_k)}\ \   \ s.t.\ \  \   A+\varepsilon_k\geq v_k(\bar{{x}}^k,\bar{t}_k)=\sup\limits_{\Sigma_\lm\times\R}v_k(x,t)\geq A,
\end{equation}
where we have used  \eqref{2.3} and \eqref{2.5}.
Now, applying \eqref{2.6}, we derive
\begin{equation*}
\left.\begin{array}{r@{\ \ }c@{\ \ }ll}
w(\bar{x}^k,\bar{t}_k)&\geq&A-\varepsilon_k>0\,, \\[0.2cm]
{\partial^\alpha_t v_k}(\bar{x}^k,\bar{t}_k)&=&C_\alpha\jf_{-\infty}^{\bar{t}_k}\fr{v_k(\bar{x}^k,\bar{t}_k)-v_k(\bar{x}^k,\tau)}
{(\bar{t}_k-\tau)^{1+\alpha}}d\tau\geq 0\,. \\[0.2cm]
\end{array}\right.
\end{equation*}
Next, we  derive a contradiction by estimating the value of $(-\Delta)^s v_k$ at the maximum point $(\bar{x}^k,\bar{t}_k)$ of $v_k$ in $\Sm_\lm\tm\R.$
 On one hand,
taking into account of differential inequality in \eqref{2.1}, \eqref{2.5} and translation and scaling invariance of the operator $\partial_t^\alpha+(-\Delta)^s$(see \eqref{TS}, we obtain
\begin{eqnarray}\nonumber\label{2.7}
(-\Delta)^s v_k(\bar{x}^k,\bar{t}_k)&=&(-\Delta)^s w(\bar{x}^k,\bar{t}_k)+\varepsilon_k(-\Delta)^s \eta_k(\bar{x}^k,\bar{t}_k)
 \\  \nonumber
&\leq& -\partial^\alpha_t w(\bar{x}^k,\bar{t}_k)+\varepsilon_k(-\Delta)^s \eta_k(\bar{x}^k,\bar{t}_k)
 \\ \nonumber
&\leq&\varepsilon_k\lt[ {\partial^\alpha_t \eta_k} (\bar{x}^k,\bar{t}_k)+(-\Delta)^s  \eta_k(\bar{x}^k,\bar{t}_k)\rt]
\\
&\leq& C\fr{\varepsilon_k }{r_k^{2s}}.
\end{eqnarray}
On the other hand, starting from the definition of operator $(-\Delta)^s$ and utilizing the antisymmetry of
$w$ in $x$ as well as  the fact $\abs{\bar{x}^k-y^\lm}>\abs{\bar{x}^k-y}$ and \eqref{2.6}, we compute
\begin{equation}\label{2.8}
\left.\begin{array}{r@{\ \ }c@{\ \ }ll}
(-\Delta)^s v_k(\bar{x}^k,\bar{t}_k)&=&C_{n,s}P.V.\jf_{\R^{n}}\frac{v_k(\bar{x}^k,\bar{t}_k)-v_k(y,\bar{t}_k)}{\abs{\bar{x}^k-y}^{n+2s}}dy \\[0.3cm]
&=&C_{n,s}P.V.\jf_{\Sigma_\lambda}\frac{v_k(\bar{x}^k,\bar{t}_k)-v_k(y,\bar{t}_k)}{\abs{\bar{x}^k-y}^{n+2s}}dy+C_{n,s}\jf_{\Sigma_\lambda}\frac{v_k(\bar{x}^k,\bar{t}_k)-v_k(y^\lambda,\bar{t}_k)}{\abs{\bar{x}^k-y^\lambda}^{n+2s}}dy \\[0.3cm]
&\geq&C_{n,s}\jf_{\Sigma_\lambda}\frac{2v_k(\bar{x}^k,\bar{t}_k)-v_k(y,\bar{t}_k)-v_k(y^\lambda,\bar{t}_k)}{\abs{\bar{x}^k-y^{\lambda}}^{n+2s}}dy
\\[0.3cm]
&\geq&C_{n,s}2\lt(v_k(\bar{x}^k,\bar{t}_k)-\ve_k\rt)\jf_{\Sigma_\lambda}\frac{1}{\abs{\bar{x}^k-y^{\lambda}}^{n+2s}}dy
\\
&\geq&\fr{C(A-\ve_k)}{r_k^{2s}}.\end{array}\right.
\end{equation}
Finally, a combination of \eqref{2.7} and \eqref{2.8} yields that
\[A-\ve_k\leq C\ve_k,\]
which leads to a contradiction for sufficiently large $k$. Hence we conclude that \eqref{2.2} is valid.
\end{proof}

\section{Radial symmetry of solutions}
In this section, we employ the narrow region principle (Theorem \ref{thm1}) as a fundamental tool to initiate the direct moving plane method, then by combining perturbation techniques and  limit arguments,  for the dual fractional equation
\begin{equation*}
\partial^\alpha_t u(x,t)+(-\Delta)^s u(x,t) =  f(u(x,t))\ \ \mbox{in}\ \ B_1(0)\times\R ,
\end{equation*}
 under suitable assumptions on the nonlinear term $f$, we show that the solution $u(\cdot,t)$ with the vanishing exterior condition is radially symmetric and strictly  decreasing with respect to the origin in a unit ball.
 \begin{proof}[Proof of Theorem \ref{thm2}]
Let $x_1$ be any direction and for any $\lm\in\R,$ we define $T_\lambda,\ \Sigma_{\lambda},\ \Om_{\lm},\ x^{\lm},\ w_\lm$ as described in  section 1. Substituting the definition of $w_\lm$ into the equation \eqref{1.11}, we have
\begin{equation}\label{1.12}
\left\{
\begin{array}{ll}
    \partial^\alpha_t w_\lm(x,t)+(-\Delta)^s w_\lm(x,t)=c_\lm(x,t)w_{\lm}(x,t) ,~   &(x,t) \in  \Omega_\lm\times\mathbb{R}  , \\
  w_\lm(x,t)\geq 0 , ~ &(x,t)  \in (\Sigma_\lambda\backslash\Omega_\lm)\times\mathbb{R},\\
  w_\lm(x,t)=-w_\lm(x^\lambda,t) , ~ &(x,t)  \in \Sigma_\lambda\times\mathbb{R}.
\end{array}
\right.
\end{equation}
where the weighted function $$c_\lm(x,t)=\fr{f(u_\lm(x,t))-f(u(x,t))}{u_\lm(x,t)-u(x,t)}$$ is bounded in $\Om_\lm\tm\R$ due to $f\in C^1\lt([0,+\infty)\rt).$ Now we carry out the direct method of moving plane which is  devided into two steps as outlined below.

$\rm{\mathbf{Step~1.}}$ Start moving the plane $T_\lm$ from $x_1=-1$ to the right along the $x_1$-axis.

 When $\lm$ is sufficiently closed to $-1$, $\Om_\lm$ is a narrow region. Then by applying the narrow rigion principle, Theorem \ref{thm1}, to
problem \eqref{1.12}, we deduce that
\begin{equation} \label{1.13}
w_\lm(x,t)\geq 0 \mb{~in~} \Sm_\lm\tm\R.
\end{equation}
This provides a starting point to move  the plane $T_\lm.$

$\rm{\mathbf{Step~2.}}$ Continuing  to  move the plane $T_\lm$  towards the right along the $x_1$-axis until  reaching its limiting position as long as  inequality \eqref{1.13}  holds. Denote
\[\lm_0:=\sup\{\lm<0\mid w_\mu(x,t)\geq 0, (x,t)\in\Sm_{\mu}\tm\R \mb{~for~any~}\mu\leq \lm\}.\]
We are going to employ the contradiction argument to  verify that
\begin{equation} \label{1.14}
\lm_0=0.
\end{equation}
Otherwise, if $\lm_0<0,$ according to the definition of $\lm_0$, there exsits a sequences of negative numbers $\{\lm_k\}$ with $\{\lm_k\}\searrow \lm_0$ and a sequence of positive numbers $\{m_k\}\searrow0$  such that
\[\inf\limits_{\Om_{\lm_k}\tm\R}w_{\lm_k}(x,t)=\inf\limits_{\Sm_{\lm_k}\tm\R}w_{\lm_k}(x,t)=-m_k .\]
 It implies that  for each fixed $k>0,$ there exists a point $(x^{k},t_{k}) \in\Om_{\lm_k}\tm\R$ 
 such that
\[-m_k\leq w_{\lm_k}(x^k,t_k)= -m_k+m_k^2<0.\]

Since $\R$ is an unbounded interval, the infimum of $w_{\lm_k}$
with respect to $t$ may not be attained. In order to estimate $\partial_t^\alpha w_{\lm_k}$,  we need to introduce a perturbation of $w_{\lm_k}$ near $t_k$ as follows
\begin{equation}\label{1.15}
v_{k}(x,t)=w_{\lm_k}(x,t)-m_k^2\eta_k(t) \ \mbox{in}\ \Sigma_{\lm_k}\times \R,
\end{equation}
where $\eta_k(t)=\eta(t-t_k)$ with $\eta\in C_0^\infty(-1,1)$ be a cut-off function as in the proof of Theorem \ref{thm1}.
Based on the above analysis and the exterior condition in \eqref{1.12} satisfied by $w_{\lm_k}$, we have
\begin{equation*}
\left\{\begin{array}{r@{\ \ }c@{\ \ }ll}
v_{k}(x^k,t_k)&=&-m_k\,, \\[0.15cm]
v_{k}(x,t)=w_{\lm_k}(x,t)&\geq& -m_k\ \mbox{in}\ \Omega_{\lm_k}\tm(\R \backslash (-1+t_k,1+t_k))\,, \\[0.15cm]
v_{k}(x,t)\geq-m_k^2\eta_k(x,t) &>& -m_k\ \mbox{in}\ (\Sigma_{\lambda_k}\backslash\Omega_{\lm_k})\times \R\,. \\[0.05cm]
\end{array}\right.
\end{equation*}
Since $u$ is continuous on $\overline\Omega_{\lm_k}\times \R$, then $v_{k}$ must attains its minimum value which is at most  $-m_k$ at $\Omega_{\lm_k}\times (-1+t_k,1+t_k)$, that is,
\begin{equation*}\label{1.16}
\exists\ \{(\bar{{x}}^k,\bar{t}_k)\}\subset\Omega_{\lm_k}\times (-1+t_k,1+t_k)\   \ s.t.\ \     -m_k-m_k^2\leq v_{k}(\bar{{x}}^k,\bar{t}_k)=\inf\limits_{\Sigma_{\lambda_k}\times\R}v_{k}(x,t)\leq -m_k,
\end{equation*}
which implies that
\begin{equation}\label{1.16}-m_k\leq w_{\lm_k}(\bar{x}^k,\bar{t}_k)\leq-m_k+m_k^2<0.\end{equation}
Similar to the process of Theorem \ref{thm1}, we have
\begin{equation}\label{1.24}\left.
\begin{array}{ll}\partial^\alpha_t v_{k}(\bar{x}^k,\bar{t}_k)+(-\Delta)^s v_{k}(\bar{x}^k,\bar{t}_k)&\leq 2C_{n,s}w_{\lm_k}(\bar{x}^k,\bar{t}_k)\jf_{\Sigma_{\lambda_k}}\frac{1}{\abs{\bar{x}^k-y^{\lambda_k}}^{n+2s}}dy
\\[0.3cm]&\leq-\fr{C(m_k-m_k^2)}{{dist}(\bar{x}^k,T_{\lm_k})^{2s}}
.\end{array}
\rt.\end{equation}
Furthermore, it follows from the  differential equation in \eqref{1.12} and \eqref{1.16} 
that
\begin{equation*}
\left.
\begin{array}{ll}
\partial^\alpha_t v_{k}(\bar{x}^k,\bar{t}_k)+(-\Delta)^s v_{k}(\bar{x}^k,\bar{t}_k)&=c_{\lm_k}(\bar{x}^k,\bar{t}_k)w_{\lm_k}(\bar{x}^k,\bar{t}_k)-m_k^2\partial^\alpha_t\eta_k(\bar{t}_k)
\\
&\geq-c_{\lm_k}(\bar{x}^k,\bar{t}_k)m_k-Cm_k^2.
\end{array}
\rt.
\end{equation*}
Here we may assume $c_{\lm_k}(\bar{x}^k,\bar{t}_k)\geq 0$ without loss of generality. Otherwise, a contradiction can be derived from \eqref{1.24}. Consquently,
\begin{equation}\label{dist}-c_{\lm_k}(\bar{x}^k,\bar{t}_k)-Cm_k\leq -\fr{C(1-m_k)}{{dist}(\bar{x}^k,T_{\lm_k})^{2s}}
\leq-\fr{C(1-m_k)}{2^{2s}},\end{equation}
by virtue of $m_{k}\to 0$ as $k\to\infty$,  we derive that for  sufficiently large $k$,
\[c_{\lm_k}(\bar{x}^k,\bar{t}_k)\geq C_0>0.\]
This implies that there exists  $\mb{~some~}\xi_{k}\in\lt(u_{\lm_k}(\bar{x}^k,\bar{t}_k),u(\bar{x}^k,\bar{t}_k)\rt) $ such that \[f'(\xi_{k})\geq C_0. \]
Thus, owing to \eqref{1.16} and the assumption $f'(0)\leq 0$, after extracting a subsequence, we  obtain
\begin{equation}\label{1.17}u(\bar{x}^k,\bar{t}_k)\geq C_1>0,\end{equation}
for sufficiently large  $k$.

In order to simplify the notation, we denote
\[\tilde{w}_{k}(x,t)=w_{\lm_k}(x,t+\bar{t}_k)\mb{~and~}\tilde{c}_{k}(x,t)=c_{\lm_k}(x,t+\bar{t}_k).\]
It follows from Arzel$\grave{a}$-Ascoli theorem that there exist two continuous function $\tilde{w}$ and $\tilde{c}$ such that

\[
\lim_{{k \to \infty}} \tilde{w}_{k}(x, t) =  \tilde{w}(x, t)
\]
and
\[
\lim_{{k \to \infty}} \tilde{c}_{k}(x, t) = \tilde{c}(x, t)
\]
uniformly in $B_1(0)\tm\R.$

Moreover, taking into account of the equation
\[
{\partial_t^\alpha}\tilde{w}_{k}(x, t) +(- \Delta)^s \tilde{w}_{k}(x, t) = \tilde{c}_{k}(x, t)\tilde{w}_{k}(x, t), \quad \text{in } \Omega_{\lm_k} \times \mathbb{R},
\]
we  conclude that the limit function $\tilde{w}$ satisfies
\begin{equation}\label{1.18}
{\partial_t^\alpha}\tilde{w}(x, t) +(- \Delta)^s \tilde{w}(x, t) = \tilde{c}(x, t)\tilde{w}(x, t), \quad \text{in } \Omega_{\lm_0} \times \mathbb{R}.
\end{equation}
As mentioned in \eqref{dist}, combining  the uniform boundedness of $c_{\lm_k}(\bar{x}^k,\bar{t}_k)$ with $\Omega_{\lm_k}\subset B_1(0)$ and  $\lm_k\to\lm_0,$ we may assume that $\bar{x}^k\to x^0\in {\Sigma}_{\lm_0}\cap \overline{B}_1(0).$
Then applying \eqref{1.16} and the continuity on $u$, we obtain
 \begin{equation}\label{mini}\tilde{w}(x^0, 0)=0=\inf\limits_{\Sigma_{\lm_0}\tm\R}{w}_{\lm_0}(x,t)=\inf\limits_{\Sigma_{\lm_0}\tm\R}\tilde{w}(x,t).\end{equation}
 Substituting this into the limit equation \eqref{1.18}, it yields
 \begin{equation*}\left.
\begin{array}{ll}0&={\partial_t^\alpha}\tilde{w}(x^0, 0) +(- \Delta)^s \tilde{w}(x^0,0)\\[0.3cm]
&=C_\alpha\jf_{-\infty}^0\fr{-\tilde{w}(x^0,\tau)}{(-\tau)^{1+\alpha}}d\tau +C_{n,s}P.V.\jf_{\Sigma_{\lambda_0}}\tilde{w}(y,0)\lt[\frac{1}{\abs{x^0-y^{\lambda}}^{n+2s}}-\frac{1}{\abs{x^0-y}^{n+2s}}\rt]dy.\end{array}
\rt.\end{equation*}
As a result of \eqref{mini}, the antisymmetry of $\tilde{w}(x, t)$ with respect to $x$ and the fact that $\abs{x^0-y^{\lambda}}>\abs{x^0-y}$, we  conclude
\begin{equation}\label{1.19}\tilde{w}(x, t) \equiv 0,~~(x, t) \in \mathbb{R}^n \times (-\infty, 0].
\end{equation}

Correspondingly, we define
\[u_k(x, t) = u(x, t + \bar{t}_k).\]
Similar to the previous discussion regarding $\tilde{w}_{k}$, we also have
\[\lim_{k \to \infty} u_k(x, t) = \tilde{u}(x, t),\]
and
\begin{equation}\label{1.20}
{\partial_t^\alpha}\tilde{u}(x, t) +(- \Delta)^s \tilde{u}(x, t) = f\lt(\tilde{u}(x, t)\rt) \quad \text{in } B_1(0) \times \mathbb{R}.
\end{equation}
In addition, by using \eqref{1.17}, we  infer that
\begin{equation}\label{1.21}\tilde{u}(x^0, 0) = \lim_{j \to \infty} u(\bar{x}^j, \bar{t}_j)\geq C_1 > 0.\end{equation}

Next, we will show that \begin{equation}\label{1.22}\tilde{u}(x,0)> 0~\mb{in}~B_1(0).\end{equation}
If this is not true, according to the exterior condition and the interior positivity of $u$, then there exists a point $\bar{x} \in B_1(0)$ such that $$\tilde{u}(\bar{x}, 0) =\inf\limits_{\R^n\tm\R}\tilde{u}(x,t)= 0,$$  which, together with  limit equation \eqref{1.20} and the assumption $f(0)\geq0$, leads to
 \begin{equation*}0=(- \Delta)^s \tilde{u}(\bar{x},0)=C_{n,s}P.V.\jf_{\R^n}\frac{-\tilde{u}(y,0)}{\abs{\bar{x}-y}^{n+2s}}dy.\end{equation*}
Thus, $\tilde{u}(x,0)\equiv 0\mb{~in~} \R^n$ due to $u\geq 0.$  This contradicts \eqref{1.21} and thus verifies the assersion \eqref{1.22}.

Due to the condition $\tilde{u}(x, 0) \equiv 0$ in $B_{1}^{c}(0)$, \eqref{1.22}  and $\lambda_0 < 0$, we further conclude that there must exists a point $\tilde{x} \in B_{1}^{c}(0)$ such that $\tilde{x}^{\lambda_0} \in B_1(0)$ and $$\tilde{w}(\tilde{x}, 0) = \tilde{u}(\tilde{x}^{\lambda_0}, 0) - \tilde{u}(\tilde{x}, 0) = \tilde{u}(\tilde{x}^{\lambda_0}, 0) > 0.$$ However, this contradicts \eqref{1.19}. Hence, we have established that the limiting position must be $T_0$.

By choosing  $x_1$ arbitrarily and considering the definition of $\lambda_0$, we  deduce that $u(\cdot,t)$ must be  radially symmetric and  monotone nonincreasing about the origin  in the unit ball $B_1(0)$. Now we are ready to demonstrate the strict monotonicity,  more specifically, it is sufficient to prove that
\begin{equation}\label{1.23}w_{\lm}(x,t)>0, ~\forall\lm\in(-1,0).
\end{equation}
If  not, then there exists some $\lambda_0 \in (-1, 0)$ and a point $(x^0, t_0) \in \Omega_{\lambda_0} \times \mathbb{R}$ such that $$w_{\lambda_0}(x_0, t_0)=\min\limits_{\Sigma_{\lm_0}\tm\R} w_{\lambda_0}= 0.$$
Combining the differential equation in \eqref{1.12} with the definition of the dual fractional operator $\partial_t^{\alpha} + (-\Delta)^s$, similar to the previous argument, we must have
\[w_{\lambda_0}(x, t)
\equiv 0 \mb{~in~} \Sigma_{\lm_0}\tm(-\infty,t_0].\]
 This is a contradiction due to the fact that $u(\cdot,t)>0$ in $B_1(0)$ and $u(\cdot,t)\equiv 0$ in $B_1^c(0)$ for each fixed $t\in\R.$
 Hence, we verify  the assertion \eqref{1.23}  and thus complete the proof of
Theorem \ref{thm2}.
\end{proof}
\section{Liouville Theorem}
In this section, we begin by employing perturbation techniques and analyzing the nonlocal one-sided nature of the one-dimensional operator $\partial^\alpha_t$ to establish the Liouville theorem for the Marchaud fractional time operator $\partial^\alpha_t$, Theorem \ref{thm5}. Directly following this, by incorporating the maximum principle in unbounded domain as stated in Theorem \ref{thm3}, we  will be able to derive    our second main result, Theorem \ref{thm4}.
\subsection{Liouville Theorem for the Marchaud fractional time operator $\partial^\alpha_t$}
Let us begin by recalling the definition of the Marchaud derivitive \begin{equation}\label{M}\partial_t^\alpha u(t)=C_\alpha\jf_{-\infty}^{t}\fr{u(t)-u(\tau)}{(t-\tau)^{1+\alpha}}d\tau.\end{equation}
 Now we  show that a  bounded solution of equation $\partial_t^\alpha u(t)=0$ in $\R^n$ must be constant.
\begin{proof}[Proof of Theorem \ref{thm5}]
The proof goes by contradiction. Since $u(t)$ is bounded in $\R,$ we may assume that
\begin{equation}\label{max}M:=\sup\limits_{t\in\R}u(t)>\inf\limits_{t\in\R}u(t)=:m.\end{equation}
 Now we divide the proof into three cases based on whether the maximum and minimum values are attained and proceed to derive a contradiction for each case.

 $\mathbf{Case ~1} $: The extrema (maximum and minimum) of \(u\) are both attained in $\R$.

  Suppose that u attains its maximum at $\bar{t}$ and its minimum at $\underline{t}$ with $\underline{t}<\bar{t}.$ Owing to equation \eqref{anti1} and the  nonlocal one-sided nature of $\partial_t^\alpha,$ see \eqref{M},
   we have
  $$u(t)\equiv u(\underline{t})=m ~\mb{for}~t<\underline{t}$$and $$u(t)\equiv u(\bar{t})=M ~\mb{for}~t<\bar{t}.$$
  This contracdicts the assumption $\underline{t}<\bar{t}$. We can derive a similar contradiction in the case $\overline{t}<\underline{t}$.

 $\mathbf{Case ~2}$: Only one of the extrema (maximum or minimum) of \(u\) is attained in $\R$.

 Without loss of generality, we may assume that \(u\) attains its maximum at $t_0$ and there exists a minimizing sequence $\{t_k\}\searrow-\infty$ such that \begin{equation}\label{case2}\lim\limits_{k\to\infty}u(t_k)=m.\end{equation}
 Then applying  equation \eqref{anti1} and the  definition of $\partial_t^\alpha$  \eqref{M},
   we have
  $$u(t)\equiv u(t_0)=M ~\mb{for}~t<t_0,$$ which contradicts \eqref{case2} due to the continuity of $u.$

 $\mathbf{Case ~3}$: The extrema (maximum and minimum) of \(u\) are both unattainable.

We assume without loss of generality that there exist a minimizing sequence $\{\underline{t}_k\}\searrow-\infty$ and a maximizing sequence $\{\bar{t}_k\}\searrow-\infty$ 
and a sequence $\{\ve_k\}\searrow0$  such that
$$u(\overline{t}_k)=M-\ve_k$$
and $$u(\underline{t}_k)=m+\ve_k.$$
By extracting subsequences, we may assume $\overline{t}_k-\underline{t}_k>1.$

Now we introduce a perturbation  of $w$ near $\underline{t}_k$ and $\overline{t}_k$ as following
\begin{equation*}
v_k(t)=u(t)+\varepsilon_k\eta_k(t) \ \mbox{in}\  \R,
\end{equation*}
where
\[\eta_k(t)=\eta\lt(\frac{t-\overline{t}_k}{r_k}\rt)-\eta\lt(\frac{t-\underline{t}_k}{r_k}\rt),\]
with $r_k=\frac{1}{4}(\overline{t}_k-\underline{t}_k)>0$ and
  $\eta\in C^\infty_0(\R)$ is a cut-off smooth function as described in the proof of Theorem\ref{thm1}.
  Clearly, $supp \eta_k\subset (-r_k+\underline{t}_k,r_k+\underline{t}_k)\cup(-r_k+\overline{t}_k,r_k+\overline{t}_k)$ and there holds $$\eta_k(\overline{t}_k)=1,~ \eta_k(\underline{t}_k)=-1,$$
  \[\eta_k(t)=-\eta\lt(\frac{t-\overline{t}_k}{r_k}\rt)\leq 0~\mb{in}~\R\backslash(-r_k+\overline{t}_k,r_k+\overline{t}_k)\]and
  \[\eta_k(t)=\eta\lt(\frac{t-\underline{t}_k}{r_k}\rt)\geq 0~\mb{in}~\R\backslash(-r_k+\underline{t}_k,r_k+\underline{t}_k).\]
  Then   we have
\begin{equation*}
\left\{\begin{array}{r@{\ \ }c@{\ \ }ll}
v_k(\overline{t}_k)&=&M,~v_k(\underline{t}_k)=m\,, \\[0.1cm]
v_k(t)&\leq& M \ ~\mb{in}~\R\backslash(-r_k+\overline{t}_k,r_k+\overline{t}_k)\,, \\[0.1cm]
v_k(t)&\geq& m\ \ ~\mb{in}~\R\backslash(-r_k+\underline{t}_k,r_k+\underline{t}_k)\,. \\[0.05cm]\end{array}\right.
\end{equation*}
Subsequently, $v_k$ must attain its maximum value, which is at least  $M$, at $[-r_k+\overline{t}_k,r_k+\overline{t}_k]$ and also attain its minimum value, which is at most  $m$, at $[-r_k+\underline{t}_k,r_k+\underline{t}_k]$, more specifically, 
\begin{equation*}
\exists\ \{\bar{s}_k\}\subset[-r_k+\overline{t}_k,r_k+\overline{t}_k]\ \   \ s.t.\ \  \   M+\varepsilon_k\geq v_k(\bar{s}_k)=\sup\limits_{t\in\R}v_k(t)\geq M.
\end{equation*}and
\begin{equation*}
\exists\ \{\underline{s}_k\}\subset[-r_k+\underline{t}_k,r_k+\underline{t}_k]\ \   \ s.t.\ \  \   m-\varepsilon_k\leq v_k(\underline{s}_k)=\inf\limits_{t\in\R}v_k(t)\leq m.
\end{equation*}
Consequently,
\begin{equation}\label{2.12}\left.\begin{array}{r@{\ \ }c@{\ \ }ll}\partial^\alpha_t v_k(\bar{s}_k)&=&C_\alpha\jf_{-\infty}^{\bar{s}_k}\fr{v_k(\bar{s}_k)-v_k(\tau)}
{(\bar{s}_k-\tau)^{1+\alpha}}d\tau\\[0.3cm]
&\geq&C_\alpha\jf_{-\infty}^{\underline{s}_k}\fr{v_k(\bar{s}_k)-v_k(\tau)}
{(\bar{s}_k-\tau)^{1+\alpha}}d\tau
\\[0.4cm]
&=&C_\alpha\lt\{\jf_{-\infty}^{\underline{s}_k}\fr{v_k(\bar{s}_k)-v_k(\underline{s}_k)}
{(\bar{s}_k-\tau)^{1+\alpha}}d\tau+
\jf_{-\infty}^{\underline{s}_k}\fr{v_k(\underline{s}_k)-v_k(\tau)}
{(\bar{s}_k-\tau)^{1+\alpha}}d\tau\rt\}
\\[0.4cm]
&\geq&C_\alpha\lt\{(M-m)\jf_{\underline{s}_k-r_k}^{\underline{s}_k}\fr{1}
{(\overline{s}_k-\tau)^{1+\alpha}}d\tau+\jf_{-\infty}^{\underline{s}_k}\fr{v_k(\underline{s}_k)-v_k(\tau)}
{(\underline{s}_k-\tau)^{1+\alpha}}d\tau\rt\}
\\[0.4cm]
&\geq&\fr{C_0}
{{r_k}^\alpha}+\partial^\alpha_t v_k(\underline{s}_k).
\end{array}\right.
\end{equation}
In addition, owing to the equation in \eqref{anti1}, we ultilize the  rescaling and translation  for $\partial_t^\alpha\eta$  (see \eqref{TS}), it is easily derived
\begin{equation}\label{2.13}\partial^\alpha_t v_k(\bar{s}_k),\ \partial^\alpha_t v_k(\bar{s}_k^\lm)\sim{\fr{\ve_k}{{r_k}^\alpha}}.\end{equation}
It follows from \eqref{2.12} and \eqref{2.13} that
\[C\ve_k\geq C_0-C \ve_k,\]
which leads to a contradiction for sufficiently large $k$.

In conclusion, we  verifies  \eqref{max} and thus completes the  proof of Theorem \ref{thm5}.
\end{proof}
\subsection{Liouville Theorem  for the dual fractional operator $\partial^\alpha_t +(-\Delta)^s$}
In the rest of this section,  we employ the Maximum principle  (Theorem \ref{thm5}) for antisymmetric functions in unbounded domains,  along with the Liouville theorem for the Marchaud fractional time operator $\partial^\alpha_t$ just established  in Section 4.1, to complete the proof of the Liouville theorem (Theorem \ref{thm4}) for the dual fractional operator $\partial^\alpha_t +(-\Delta)^s.$
\begin{proof}[Proof of Theorem \ref{thm4}]
For each fixed $t\in\R$, we first claim that $u(\cdot, t)$ is symmetric with respect to any hyperplane in $\R^n$. Let $x_1$ be any given direction in $\R^n$, and we keep the notation $T_\lm,\Sm_\lm,w_\lm(x,t),$ $u_\lm(x,t),$ $x^\lm$ defined in  section 1. For any $\lm\in\R$, on account of equation \eqref{2.9}, we derive
\begin{equation*}
\left\{\begin{array}{ll}
\partial^\alpha_t w_\lm(x,t)+(-\Delta)^s w_\lm(x,t) =0,\ \ &\mbox{in}\ \ \Sm_\lm\times\R ,\\[0.3cm]
w_\lm(x,t)=-w_\lm(x^\lm,t), &\mbox{in}\ \ \Sm_\lm\times\R.
\end{array}\right.\end{equation*}
It follows from Theorem \ref{thm3} that
\[w_\lm(x,t)\equiv 0 \mb{~in~} \Sm_\lm\tm\R.\]
As a result,  the arbitrariness of $\lm$  indicates that $u(\cdot, t)$ exhibits symmetry with respect to any hyperplane perpendicular to the $x_1$-axis. Moreover, since the selection of the $x_1$ direction is arbitrary, we  conclude that $u(\cdot, t)$ is symmetric with respect to any hyperplane in $\R^n$ for each fixed $t\in\R$. Thus, we  deduce that $u(x, t)$ depends only on $t$, i.e.,
\[u(x,t)=u(t) \mb{~in~}\R^n\tm\R.\]

Now equation \eqref{2.9} reduce to the following one-dimensional one-sided fractional equation
\begin{equation*}
\partial^\alpha_t u(t) = 0\ \ \mbox{in}\ \ \R .
\end{equation*}
Then  Theorem \ref{thm5} yields that  $u(t)$ must be constant.  Thus, we have confirmed that the bounded solution of  equation \eqref{2.9} must be  constant. This completes the  proof of Theorem \ref{thm4}.
\end{proof}
\vskip4mm
$\mathbf{Acknowledgement}$
The work of 
the second author is partially supported by the National Natural Science Foundation of China (NSFC Grant No.12101452) and the work of the third author is partially supported by the National Natural Science Foundation of China (NSFC Grant No.12071229).

\end{document}